\documentclass[12pt]{amsart}
\usepackage{amssymb,latexsym,amsmath,enumerate,amsthm,amscd,mathrsfs,yfonts,hyperref}
\usepackage{fullpage}
\input xy
\xyoption{all}



\newcommand{\ZZ}{\mathbb Z}
\newcommand{\QQ}{\mathbb Q}
\newcommand{\CC}{\mathbb C}

\newcommand{\cpt}{\mathbb K}

\def\bK{\mathbf K}
\def\F{\mathbf F}


\def\BC{\textup{BC}}

\def\Cone{\textup{Cone}}

\def\cF{\mathtt{Fin}}
\def\id{\mathrm{id}}

\def\Or{\textup{Or}}
\def\NK{\textup{NK}}

\def\prot{\hat{\otimes}}

\def\K{\textup{K}}
\def\SS{\mathbb{S}}

\def\alg{\textup{alg}}
\def\L{\textup{L}}

\def\S{\mathcal{S}}

\def\Tor{\textup{Tor}}

\def\KT{\textup{K}^{\textup{top}}}


\def\top{\textup{top}}

\def\KK{\textup{KK}}
\def\E{\mathbf{E}}

\def\H{\textup{H}}

\def\DL{\textup{DL}}

\def\cN{\mathcal N}
\def\1{\bf{1}}

\def\VC{\mathtt{VCyc}}

\newcommand{\map}{\rightarrow}


\newcommand{\beq}{\begin{eqnarray}}
\newcommand{\beqn}{\begin{eqnarray*}}
\newcommand{\eeq}{\end{eqnarray}}
\newcommand{\eeqn}{\end{eqnarray*}}


\theoremstyle{definition}
\newtheorem{thm}{Theorem}[section]
\theoremstyle{definition}

\theoremstyle{definition}
\newtheorem{lem}[thm]{Lemma}
\theoremstyle{definition}
\newtheorem{prop}[thm]{Proposition}
\theoremstyle{definition}

\theoremstyle{definition}
\newtheorem{conj}[thm]{Conjecture}
\theoremstyle{definition}

\theoremstyle{definition}
\newtheorem{rem}[thm]{Remark}
\theoremstyle{definition}
\newtheorem{defn}[thm]{Definition}

\theoremstyle{definition}
\newtheorem*{theorem}{Theorem}


\begin{document}

\title{Assembly maps with coefficients in topological algebras and the integral $\K$-theoretic Novikov conjecture}
\author{Snigdhayan Mahanta}
\email{s.mahanta@uni-muenster.de}
\address{Mathematical Institute, University of Muenster, Einsteinstrasse 62, 48149 Muenster, Germany.}
\keywords{$\K$-theory, Novikov conjecture, homotopy groups with coefficients, Baum--Connes conjecture}
\subjclass[2010]{19D50, 19K35, 46L80}

\thanks{This research was supported under Australian Research Council's Discovery Projects funding scheme (project number DP0878184) and the Deutsche Forschungsgemeinschaft (SFB 878).}

\begin{abstract}
We prove that any countable discrete and torsion free subgroup of a general linear group over an arbitrary field or a similar subgroup of an almost connected Lie group satisfies the integral algebraic $\K$-theoretic (split) Novikov conjecture over $\cpt$ and $\S$, where $\cpt$ denotes the $C^*$-algebra of compact operators and $\S$ denotes the algebra of Schatten class operators. We introduce assembly maps with finite coefficients and under an additional hypothesis, we prove that such a group also satisfies the algebraic $\K$-theoretic Novikov conjecture over $\overline{\QQ}$  and $\CC$ with finite coefficients. For all torsion free Gromov hyperbolic groups $G$, we demonstrate that the canonical algebra homomorphism $\cpt[G]\map C^*_r(G)\prot\cpt$ induces an isomorphism between their algebraic $\K$-theory groups. 
\end{abstract}

\maketitle

\begin{center}
{\bf Introduction}
\end{center}

For any discrete group $G$ and a unital ring $R$ the algebraic $\K$-theoretic Novikov conjecture for $G$ over $R$ asserts that a canonically defined Loday assembly map $$(\mu^\L_R)_*:\H_*(BG;\bK_R)\map\K_*(R[G])$$ is rationally injective. Here $\bK_R$ denotes the nonconnective algebraic $\K$-theory spectrum of $R$. In fact, there is a map of spectra $\mu^\L_R: BG_+\wedge \bK_R\map\bK_{R[G]}$, which induces $(\mu^\L_R)_*$. The stronger integral $\K$-theoretic (resp. split $\K$-theoretic) Novikov conjecture asserts that $(\mu^\L_R)_*$ is injective (resp. split injective). Using standard excision arguments and the fact that $\H$-unital $\QQ$-algebras in the sense of \cite{Wod} satisfy excision in algebraic $\K$-theory \cite{SusWod2}, the Loday assembly map can be extended to $\H$-unital coefficient $\QQ$-algebras $R$. Throughout this article the term $\K$-theory without any adjective will refer to algebraic $\K$-theory and will be denoted by $\K_*$. Let us introduce a few more notations.

\noindent
{\bf Notations and conventions:} 
\begin{description}
\item[$(\mu^\L_R)_*$] Loday assembly map with coefficients in $R$,
\item[$(\mu^{\DL}_R)_*$] Davis--L{\"u}ck assembly map with coefficients in $R$,
\item[$(\mu^{\BC}_A)_*$] Baum--Connes assembly map with coefficients in $A$ (separable $C^*$-algebra).
\end{description}

\noindent
Sometimes we are going to suppress the coefficient algebra $R$ or $A$ from the notation if it is the complex numbers, i.e., $\mu^?_*=(\mu^?_\CC)_*$, where $?=\L,\DL,\BC$. All groups are assumed to be countable and $\prot$ will denote the minimal $C^*$-tensor product. We are going to freely use the notations in \cite{DavLue}.

In this article we prove the following two results ({\em reduction principles}):

\begin{thm} \label{thm2} (see Theorem \ref{cpt} and Theorem \ref{overS})
If a countable discrete and torsion free group satisfies the integral $\KT$-theoretic Novikov (resp. the split $\KT$-theoretic Novikov) conjecture with $\CC$-coefficients, i.e., $\mu^\BC_*$ is injective (resp. split injective), then it satisfies the integral $\K$-theoretic Novikov (resp. the split $\K$-theoretic Novikov)  conjecture over $\cpt$ and $\S$, where $\cpt$ denotes the $C^*$-algebra of compact operators and $\S$ denotes the algebra of Schatten class operators.

\noindent
If $\mu^\BC_*$ is merely rationally injective, then so is $(\mu^\L_\cpt)_*$. (The rational injectivity of $(\mu^\L_\S)_*$ is known for all groups without any further hypothesis \cite{GuoliangNovikov}).
\end{thm}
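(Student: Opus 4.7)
The plan is to factor the Loday assembly map with coefficients in $\cpt$ (respectively in $\S$) through the Baum--Connes assembly map, by exploiting the comparison between algebraic and topological $\K$-theory for these smooth subalgebras of bounded operators on a Hilbert space.

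First, I invoke the Suslin--Wodzicki resolution of Karoubi's conjecture: for every (separable) C*-algebra $A$ the comparison $\K_*(A \prot \cpt) \to \KT_*(A)$ is an isomorphism, and at the spectrum level $\bK_\cpt$ is weakly equivalent to the topological $\K$-theory spectrum $\KT$. Consequently $\H_*(BG; \bK_\cpt) \cong \KT_*(BG)$, which is precisely the source of the analytic assembly map. For the Schatten algebra $\S$ the same identifications hold, via Wodzicki's excision theorem for $H$-unital ideals together with the smoothness/density of Schatten classes inside $\cpt$; in particular $\K_*(A \prot \S) \cong \KT_*(A)$ for $A$ a C*-algebra and $\bK_\S \simeq \KT$.

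Second, I would construct the commutative square
$$\xymatrix{
\H_*(BG; \bK_\cpt) \ar[r]^{(\mu^\L_\cpt)_*} \ar[d]_{\cong} & \K_*(\cpt[G]) \ar[d] \\
\KT_*(BG) \ar[r]^{\mu^\BC_*} & \KT_*(C^*_r(G))
}$$
in which the right vertical arrow is the composite $\K_*(\cpt[G]) \to \K_*(C^*_r(G) \prot \cpt) \cong \KT_*(C^*_r(G))$, the first map induced by the canonical homomorphism $\cpt[G] \hookrightarrow C^*_r(G) \prot \cpt$ and the second being Suslin--Wodzicki. The cleanest way to verify commutativity is through the Davis--L\"uck formalism: both assembly maps arise from spectra over the orbit category $\Or(G)$, and a natural transformation of coefficient systems from $\bK_{\cpt[-]}$ to a topological $\K$-theory functor $\KT(C^*_r(-))$ produces the square. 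Since $G$ is torsion free we have $\underline E G = EG$, so the Davis--L\"uck and Loday assembly maps are identified, and the source of $\mu^\BC_*$ (which a priori involves $E_{\mathcal Fin} G$) collapses to ordinary $\KT$-homology of $BG$.

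Granted this diagram, (split) injectivity of $\mu^\BC_*$ together with the isomorphism on the left immediately forces (split) injectivity of $(\mu^\L_\cpt)_*$, and the same argument applies verbatim to $\S$; the rational injectivity statement is just the rational case. The main obstacle I foresee is the commutativity of the square: reconciling the spectrum-level construction of the Loday assembly map (defined via the purely algebraic group ring) with the Baum--Connes map (defined via $KK$-theory or its equivariant analogue) and checking naturality with respect to the ring homomorphism $\cpt[G] \to C^*_r(G) \prot \cpt$ requires routing both through an equivariant coefficient spectrum $\Or(G)$-diagram, and a careful application of the Suslin--Wodzicki density theorem to equivariant $\K$-theoretic coefficient systems. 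Once this comparison is set up the conclusion is essentially formal.
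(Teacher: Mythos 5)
Your proposal follows essentially the same strategy as the paper: identify $\H_*(BG;\bK_\cpt)$ with the source of the analytic assembly map via the Suslin--Wodzicki resolution of Karoubi's conjecture, route the comparison through the Davis--L\"uck orbit-category formalism to obtain a commutative square, and deduce (split) injectivity of $(\mu^\L_\cpt)_*$ from that of $\mu^\BC_*$; the $\S$-case is then reduced to the $\cpt$-case by the weak equivalence $\bK_\S\to\bK_\cpt$.

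One ingredient you gloss over deserves to be named. Running the Davis--L\"uck natural transformation with $\cpt[-]$ coefficients gives you a square whose bottom row is the Baum--Connes assembly map with $\cpt$-coefficients, $(\mu^\BC_\cpt)_*:\KT_*(BG;\cpt)\to\KT_*(C^*_r(G,\cpt))$, not the $\CC$-coefficient map $\mu^\BC_*$ that appears in your diagram. Passing between the two, i.e., identifying $\mu^\BC_*\otimes\id_{\KT_*(\cpt)}$ with $(\mu^\BC_\cpt)_*$ on the domain $\KT_*(BG;\cpt)\cong\KK^G_*(C_0(EG),\cpt)$, is the content of Proposition~4.9 of Chabert--Echterhoff--Oyono-Oyono (the ``going-down'' compatibility between the $\alpha_G$ external product and the assembly map), combined with flatness of $\KT_*(\cpt)$. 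This is not covered by Suslin--Wodzicki density alone, which only controls the comparison of algebraic and topological $\K$-theory on the coefficient algebra and does not address the $\KK$-theoretic pairing on the domain of the analytic assembly map. Your phrase about the ``equivariant coefficient spectrum $\Or(G)$-diagram'' correctly flags where the difficulty lies, but the missing citation is the CEO compatibility, which the paper uses in Proposition~\ref{BCDL} and again in Theorem~\ref{cpt}. A minor additional imprecision: writing $\K_*(A\prot\S)\cong\KT_*(A)$ is sloppy since $\S$ is not a $C^*$-algebra; the paper instead argues directly that the inclusion $\S\hookrightarrow\cpt$ induces a weak equivalence $\bK_\S\to\bK_\cpt$ using Corti\~nas--Thom's $\K$-regularity and $\H$-unitality results for $\S$, and then transfers injectivity of $(\mu^\L_\cpt)_*$ to $(\mu^\L_\S)_*$ by naturality.
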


\begin{thm} \label{thm1} (see Theorem \ref{overC})
Let $G$ be a countable discrete and torsion free group satisfying the split $\KT$-theoretic Novikov conjecture with $\CC$-coefficients, i.e., $\mu^\BC_*$ is split injective. Assume, in addition, that the $\ZZ/n$-homology of $BG$ is concentrated in even degrees. Then $G$ satisfies the $\K$-theoretic Novikov conjecture over $\overline{\QQ}$ and $\CC$ with finite $\ZZ/n$-coefficients.
\end{thm}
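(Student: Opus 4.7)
The plan is to reduce the Novikov conjecture over $\overline{\QQ}$ and $\CC$ with $\ZZ/n$-coefficients to the $\cpt$-coefficient case already in hand by Theorem~\ref{cpt}, via naturality of the Loday assembly under the corner embeddings $\overline{\QQ}\hookrightarrow\CC\hookrightarrow\cpt$ (the latter sending $1$ to a fixed rank-one projection). For $R\in\{\overline{\QQ},\CC\}$ this produces a commutative square
$$
\xymatrix{
\H_*(BG;\bK_R/n) \ar[r]^-{\alpha_R} \ar[d]_{(\mu^\L_R)_*} & \H_*(BG;\bK_\cpt/n) \ar[d]^{(\mu^\L_\cpt)_*} \\
\K_*(R[G];\ZZ/n) \ar[r] & \K_*(\cpt[G];\ZZ/n).
}
$$
By Theorem~\ref{cpt} the right vertical map is integrally split injective; via the universal coefficient short exact sequence this implies injectivity after reducing modulo $n$. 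It therefore suffices to show that $\alpha_R$ is injective.

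Next I would compute the induced map $\pi_*(\bK_R/n)\to\pi_*(\bK_\cpt/n)$. Suslin rigidity gives $\K_*(\overline{\QQ};\ZZ/n)\cong\K_*(\CC;\ZZ/n)$; Suslin's comparison theorem identifies $\K_*(\CC;\ZZ/n)\cong\KT_*(\CC;\ZZ/n)$; $C^*$-stability yields $\KT_*(\CC;\ZZ/n)\cong\KT_*(\cpt;\ZZ/n)$; and the Suslin--Wodzicki theorem (Karoubi's conjecture for stable $C^*$-algebras) identifies the latter with $\K_*(\cpt;\ZZ/n)$. Since these identifications are natural in the algebra and compatible with the corner embedding, the induced map on coefficients is concentrated in even degrees, is an isomorphism in every nonnegative degree (both sides being $\ZZ/n$), and is the trivial inclusion $0\hookrightarrow\ZZ/n$ in every negative even degree. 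In particular it is degree-wise injective.

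I would then invoke the Atiyah--Hirzebruch spectral sequence
$$
E^2_{p,q}=\H_p(BG;\pi_q(\bK_?/n))\Rightarrow\H_{p+q}(BG;\bK_?/n)
$$
for $?\in\{R,\cpt\}$. The hypothesis that $\H_*(BG;\ZZ/n)$ is concentrated in even degrees, combined with the even-degree concentration of the coefficients computed above, forces $E^2_{p,q}=0$ unless $p+q$ is even. Since every differential $d_r$ changes $p+q$ by $-1$, all differentials vanish and both spectral sequences degenerate at $E^2$. The induced map on $E^2$-pages is term-wise injective by the previous step, hence so is the map on $E^\infty$-pages; the exhaustive Hausdorff filtrations on the abutments then yield injectivity of $\alpha_R$, which combined with the diagram completes the argument.

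The principal obstacle is verifying that the Suslin and Suslin--Wodzicki identifications are natural with respect to the corner embedding $\CC\hookrightarrow\cpt$, so that the spectrum-level map $\bK_R/n\to\bK_\cpt/n$ really does realize the injection pattern claimed on homotopy groups. Once this compatibility is secured, the AHSS degeneration driven by the parity hypothesis on $\H_*(BG;\ZZ/n)$ and the resulting injectivity of $\alpha_R$ are a purely formal consequence.
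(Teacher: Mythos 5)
Your argument is correct and hinges on exactly the same ingredients as the paper's proof (Suslin's comparison and rigidity theorems, split injectivity coming from the Baum--Connes side, the universal coefficient/snake-lemma step, and the $E^2$-collapse of the Atiyah--Hirzebruch spectral sequence forced by the parity hypothesis on $H_*(BG;\ZZ/n)$); the only difference is organizational. The paper works directly with the diagram relating $\mu^\L_\CC$, $\mu^\alg_\CC$, the comparison map $c(\CC)_*:\H_*(BG;\bK_\CC)\to\H_*(BG;\bK^\top_\CC)$ and $\mu^\DL_*\cong\mu^\BC_*$, and proves injectivity with $\ZZ/n$-coefficients of $c(\CC)_*$ (by factoring through the connective cover $\bK^\top_\CC\langle 0\rangle$ and using Lemma~\ref{AHSS} plus the AHSS) and of $\mu^\BC_*$ (via the universal coefficient sequence). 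You instead route through the already-established Theorem~\ref{cpt}, replacing the comparison map $c(\CC)_*$ by the map $\alpha_R$ induced by the corner embedding $R\hookrightarrow\cpt$, and note that $\bK_\cpt/n\simeq\bK^\top_\cpt/n\simeq\bK^\top_\CC/n$, so that the coefficient map $\pi_*(\bK_R/n)\to\pi_*(\bK_\cpt/n)$ is exactly the same as the paper's map from $\pi_*(\bK^\top_\CC\langle 0\rangle/n)$ to $\pi_*(\bK^\top_\CC/n)$, and then the identical AHSS-degeneration argument applies. The ``principal obstacle'' you flag — compatibility of the Suslin and Karoubi/Suslin--Wodzicki identifications with the corner embedding — is resolved immediately by the naturality of the comparison transformation $c:\bK_{(-)}\to\bK^\top_{(-)}$: in the square whose horizontal maps are $c(\CC)$ and $c(\cpt)$ and whose vertical maps are induced by $\CC\hookrightarrow\cpt$, the bottom and right arrows are weak equivalences (stability of $\cpt$ algebraically and topologically), so the left arrow realizes Suslin's isomorphism on $\ZZ/n$-homotopy in nonnegative degrees. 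What your packaging buys is a cleaner modularity (Theorem~\ref{overC} falls out of Theorem~\ref{cpt} plus a purely domain-side AHSS computation, without redrawing the Baum--Connes comparison diagram); what the paper's buys is a presentation that does not presuppose the $\cpt$-coefficient theorem and makes the role of $\mu^\BC_*$ visible at the point of use.
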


\begin{rem}
In the proof of the above Theorem \ref{thm1} one needs to ensure that the canonical map from the connective $\K$-homology of $BG$ to the nonconnective $\K$-homology of $BG$ (both with finite $\ZZ/n$-coefficients) is injective. The condition that the $\ZZ/n$-homology of $BG$ is concentrated in even degrees implies the injectvity of this map; however, it may hold under more general circumstances (see also Remark \ref{Gen}).
\end{rem}

Our strategy makes use of the Davis--L{\"u}ck unified perspective on the isomorphism conjectures. Some intricate analysis of assembly maps with finite coefficients, which we define in this article (see Definition \ref{FinAss}), is done invoking Suslin's work on algebraic $\K$-theory \cite{Suslin,SusLoc}. Along the way in Section \ref{SpecSeq} we produce K{\"u}nneth type spectral sequences using the machinery of \cite{EKMM} for computing the domains of the assembly maps. Although an extremely trivial case of these spectral sequences is used in this article, we hope that they will be of independent interest. We also observe that for any torsion free Gromov hyperbolic group $G$ the canonical algebra homomorphism $\cpt[G]\map C^*_r(G)\prot\cpt$ induces an isomorphism between their algebraic $\K$-theory groups (see Theorem \ref{mySW}). Let us mention that the split $\KT$-theoretic Novikov conjecture is known to be true in numerous examples. For instance, let $G$ be a countable discrete group with a proper left-invariant metric. Thanks to Skandalis--Tu--Yu we know that if $G$ admits a uniform embedding into a Hilbert space, then $G$ satisfies the split $\KT$-theoretic Novikov conjecture with coefficients in any separable $G$-$C^*$-algebra \cite{YuCoarseBC,SkaTu}. Guentner--Higson--Weinberger showed that if $G$ is a countable discrete subgroup of $\textup{GL}(n,F)$ for any field $F$ or of any almost connected Lie group, then $G$ satisfies the split $\KT$-theoretic Novikov conjecture with coefficients in any separable $G$-$C^*$-algebra \cite{GueHigWei}. As a consequence of the above-mentioned {\em reduction principles}, we arrive at the following interesting application:

\begin{theorem}
Any countable discrete and torsion free subgroup $G$ of a general linear group over an arbitrary field or a similar subgroup of an almost connected Lie group satisfies the split $\K$-theoretic Novikov conjecture over $\cpt$ and $\S$, i.e., the Loday assembly maps $$\text{$(\mu^\L_\cpt)_*:\H_*(BG;\bK_\cpt)\map\K_*(\cpt[G])$ and $(\mu^\L_\S)_*:\H_*(BG;\bK_\S)\map\K_*(\S[G])$}$$ are split injective. They also satisfy the $\K$-theoretic Novikov conjecture with finite coefficients over $\overline{\QQ}$ and $\CC$, i.e., the Loday assembly maps $(\mu^\L_{\overline{\QQ}})_*$ and $(\mu^\L_\CC)_*$ are injective with finite coefficients, provided $G$ satisfies the additional hypotheses of Theorem \ref{thm1}.

\noindent
For a countable discrete and torsion free group with a proper left-invariant metric, the above assertions continue to hold if the group admits a uniform embedding into a Hilbert space.
\end{theorem}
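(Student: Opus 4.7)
The plan is to recognize that this statement is essentially a direct concatenation of the two reduction principles established as Theorem \ref{thm2} and Theorem \ref{thm1}, fed by classical split injectivity results for the Baum--Connes assembly map $\mu^\BC_*$ with $\CC$-coefficients. No new technical machinery is required; the proof amounts to verifying that the hypotheses of the reduction principles are met in each of the two scenarios, and then concatenating the implications.

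First, for a countable discrete torsion free subgroup $G$ of $\textup{GL}(n,F)$ for some field $F$, or of an almost connected Lie group, I would invoke the theorem of Guentner--Higson--Weinberger \cite{GueHigWei}, which supplies the split injectivity of $\mu^\BC_*$ (this result is in fact stronger, holding with coefficients in any separable $G$-$C^*$-algebra, but only the $\CC$-coefficient case is required here). Theorem \ref{thm2} then converts this into the split injectivity of $(\mu^\L_\cpt)_*$ and $(\mu^\L_\S)_*$, which is precisely the split $\K$-theoretic Novikov conjecture over $\cpt$ and $\S$. Under the additional assumption that the $\ZZ/n$-homology of $BG$ is concentrated in even degrees, Theorem \ref{thm1} applies verbatim and produces the $\K$-theoretic Novikov conjecture over $\overline{\QQ}$ and $\CC$ with finite $\ZZ/n$-coefficients, yielding the injectivity of $(\mu^\L_{\overline{\QQ}})_*$ and $(\mu^\L_\CC)_*$ with finite coefficients.

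For the final assertion, concerning a countable discrete torsion free group $G$ equipped with a proper left-invariant metric that admits a uniform embedding into a Hilbert space, I would simply substitute the theorem of Skandalis--Tu--Yu \cite{YuCoarseBC, SkaTu} for Guentner--Higson--Weinberger; this provides the required split injectivity of $\mu^\BC_*$ in the coarse embedding setting. Applying Theorems \ref{thm2} and \ref{thm1} as before then yields the analogous conclusions over $\cpt$, $\S$, $\overline{\QQ}$, and $\CC$.

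Since each step of this argument is a direct invocation of an already established result, no genuine obstacle arises; the real content of the theorem has been absorbed into the two reduction principles, whose function is to translate information from the topological $\K$-theoretic (Baum--Connes) side to the algebraic $\K$-theoretic (Loday) side. The only thing worth stating carefully is that the torsion freeness of $G$ is used both to ensure that $BG$ models the classifying space for proper actions (so that the Davis--L\"uck and Baum--Connes assembly maps can be compared cleanly) and to remain within the scope of the reduction principles as stated.
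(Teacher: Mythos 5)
Your proposal is correct and follows exactly the same route as the paper: the paper explicitly presents this theorem as a consequence of the two reduction principles (Theorems \ref{thm2} and \ref{thm1}) fed by the Guentner--Higson--Weinberger result \cite{GueHigWei} for linear/Lie-group subgroups and the Skandalis--Tu--Yu result \cite{YuCoarseBC,SkaTu} for groups admitting a uniform embedding into Hilbert space, with no further argument supplied. Your careful note about where torsion-freeness is used matches the role it plays in the paper's reduction lemmas (e.g.\ Lemma \ref{simplify} and the identification $\underline{E}G \simeq EG$).
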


\noindent
The algebra of Schatten class operators $\S$ is very important from the viewpoint of higher index theory. The split injectivity result above can also be deduced for many groups (e.g., when $G$ is a discrete subgroup of an almost connected Lie group) with arbitrary coefficient algebra from an earlier work of Bartels--Rosenthal \cite{BarRos} (see also some related work of Ji \cite{JiNovikov}). Our result can be regarded as an application of the Baum--Connes conjecture \cite{BaumConnes}; more precisely, that of the (split) injectivity part of the assertion.

\vspace{3mm}
\noindent
{\bf Acknowledgements.} The author is extremely grateful to G. Yu for his comments on the first draft of this article. The author also wishes to thank P. Baum, R. J. Deeley, R. Meyer, H. Reich, J. M. Rosenberg, A. Valette and C. Westerland for helpful email correspondences. Finally the author is indebted to the anonymous referees for pointing out a mistake in the first draft and the constructive feedback.

\section{Some spectral sequences} \label{SpecSeq} For any $C^*$-algebra $A$, there is a symmetric spectrum (in the sense of \cite{HSS}) model of $\bK^\top_A$, which is, in addition, a (left) module spectrum over a (commutative) symmetric ring spectrum model of $\bK^\top_\CC$ (see Theorem B of \cite{JoaKHom}). Furthermore, there is a unit map from the sphere spectrum $\SS$ to $\bK^\top_\CC$, which is a homomorphism of commutative symmetric ring spectra. After passing to functorial cofibrant replacements (in the $\SS$-model structures \cite{ShiModel} {\em or the flat stable model structures as in Schwede's book on symmetric spectra \cite{SchwedeBook}}) on the categories of (left) module spectra over the symmetric ring spectra, we may assume that all spectra are cofibrant. Now apply the functorial left Quillen construction, which produces a (cofibrant) $\SS$-algebra (resp. $\SS$-module) from a (cofibrant) symmetric ring spectrum (resp. symmetric spectrum) as explained in \cite{SchSModSymSpec}. Thus we obtain a model of $\bK^\top_A$ as a (left) $\bK^\top_\CC$-module over an $\SS$-algebra model of $\bK^\top_\CC$, where all $\SS$-algebras (resp. $\SS$-modules) are cofibrant. For the details about $\SS$-algebras and $\SS$-modules the readers may refer to \cite{EKMM}. Now one may write $$BG_+\wedge\bK^\top_A\simeq (BG_+\wedge\bK^\top_\CC)\wedge_{\bK^\top_\CC} \bK^\top_A,$$ using a CW model of $BG$. If $R$ is a (cofibrant) commutative $\SS$-algebra and $M,N$ are (cofibrant) $R$-modules, then there is a strongly convergent natural (both in $M$ and $N$) spectral sequence (see Theorem 4.1 of \cite{EKMM})

\beq \label{mainSS}
 E^2_{p,q} =\Tor^{\pi_*(R)}_{p,q}(\pi_*(M),\pi_*(N))\Rightarrow \pi_{p+q} (M\wedge_R N).
 \eeq Here $p$ is the resolution degree of $M$ and $q$ is the internal degree of the graded modules whence it is a right half plane homological spectral sequence.

\begin{rem}
The symmetric spectra constructed in \cite{JoaKHom} take values in pointed simplicial sets, whereas $\SS$-modules are spectra valued in based spaces. However, it is known that there is a Quillen equivalence between the category of symmetric spectra valued in pointed simplicial sets and that of symmetric spectra valued in based spaces (see Section 18 of \cite{MMSS}).
\end{rem}

\noindent
Setting $R=\bK^\top_\CC$, $M=BG_+\wedge \bK^\top_\CC$ and $N=\bK^\top_A$ and observing that $\pi_*(BG_+\wedge\bK^\top_\CC)=\K^\top_*(BG)$ we get:

\begin{lem}
There is a right half plane homological strongly convergent natural spectral sequence
\beq \label{SS}
E^2_{p,q}=\Tor^{\ZZ[u,u^{-1}]}_{p,q}(\K^\top_*(BG),\K^\top_*(A))\Rightarrow \pi_{p+q}(BG_+\wedge\bK^\top_A),
\eeq where the degree of $u$ is $2$.
\end{lem}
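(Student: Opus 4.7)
The plan is to specialize the EKMM spectral sequence (\ref{mainSS}) to the triple $R = \bK^\top_\CC$, $M = BG_+ \wedge \bK^\top_\CC$, $N = \bK^\top_A$, and then to identify each of the three pieces: the coefficient ring $\pi_*(R)$, the two module homotopy groups $\pi_*(M)$ and $\pi_*(N)$, and the abutment $\pi_{p+q}(M\wedge_R N)$. The cofibrancy requirements are already arranged in the preceding paragraph: $\bK^\top_\CC$ is a cofibrant commutative $\SS$-algebra, $\bK^\top_A$ is a cofibrant $\bK^\top_\CC$-module, and smashing a cofibrant $\bK^\top_\CC$-module with a CW based space produces another cofibrant $\bK^\top_\CC$-module (after, if necessary, a further functorial cofibrant replacement).

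For the coefficient ring, Bott periodicity gives $\pi_*(\bK^\top_\CC) = \K^\top_*(\CC) = \ZZ[u,u^{-1}]$ with $u$ the Bott element in degree $2$; this identifies the base ring appearing in the $\Tor$. For the modules I use $\pi_*(\bK^\top_A) = \K^\top_*(A)$ by definition, and $\pi_*(BG_+\wedge \bK^\top_\CC) = \K^\top_*(BG)$ as already noted in the excerpt. Both are naturally graded modules over $\ZZ[u,u^{-1}]$ by the $\bK^\top_\CC$-module structure, and the $\Tor$ is computed in the category of such graded modules, with $p$ the resolution degree and $q$ the internal degree.

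To identify the abutment I invoke the equivalence
\[
BG_+ \wedge \bK^\top_A \;\simeq\; (BG_+\wedge \bK^\top_\CC)\wedge_{\bK^\top_\CC} \bK^\top_A
\]
stated in the paragraph before the lemma; this is the standard base-change equivalence $X_+\wedge N \simeq (X_+\wedge R)\wedge_R N$ for $R$ an $\SS$-algebra, $N$ an $R$-module and $X$ a CW space, which follows from the unit equivalence $X_+\wedge \SS \wedge N \simeq X_+\wedge N$ after extending scalars along $\SS\to R$. Hence $\pi_{p+q}(M\wedge_R N) = \pi_{p+q}(BG_+\wedge \bK^\top_A)$, and the spectral sequence (\ref{mainSS}) becomes exactly (\ref{SS}); strong convergence and naturality in both $BG$ and $A$ are inherited from the EKMM spectral sequence.

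The only mildly delicate point, and the one I would be most careful about, is the verification that the cofibrancy arrangements from \cite{SchSModSymSpec} and \cite{EKMM} really do transport the relevant objects into the setting of Theorem~4.1 of \cite{EKMM}; once this is in place, the identifications of $\pi_*(R)$, $\pi_*(M)$, $\pi_*(N)$ and the base-change equivalence are essentially formal. There is no computational obstacle beyond Bott periodicity.
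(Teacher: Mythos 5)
Your proof is correct and follows exactly the route the paper takes: specialize the EKMM $\Tor$ spectral sequence \eqref{mainSS} with $R=\bK^\top_\CC$, $M=BG_+\wedge\bK^\top_\CC$, $N=\bK^\top_A$, identify $\pi_*(R)=\ZZ[u,u^{-1}]$ via Bott periodicity, and use the base-change equivalence $BG_+\wedge\bK^\top_A\simeq(BG_+\wedge\bK^\top_\CC)\wedge_{\bK^\top_\CC}\bK^\top_A$ to recognize the abutment. Your explicit attention to cofibrancy and to the base-change identification makes the sketch a bit more careful than the paper's, but the argument is the same.
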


\begin{prop} \label{domain}
There is an identification $$\K^\top_*(BG)\otimes\K^\top_*(\cpt)\cong\pi_*(BG_+\wedge\bK^\top_\cpt)=\H_*(BG;\bK^\top_\cpt),$$ which is natural in $G$.
\end{prop}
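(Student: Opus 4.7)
The plan is to apply the spectral sequence \eqref{SS} with $A=\cpt$ and observe that it degenerates at the $E^2$-page. The key input is the computation of $\K^\top_*(\cpt)$ as a module over the coefficient ring $\K^\top_*(\CC)=\ZZ[u,u^{-1}]$. Via Morita invariance of topological $\K$-theory---realised, for instance, by the canonical corner embedding $\CC\hookrightarrow\cpt$ as a rank-one projection---the induced map $\bK^\top_\CC\to\bK^\top_\cpt$ is a weak equivalence of $\bK^\top_\CC$-module spectra, so in particular $\K^\top_*(\cpt)\cong\ZZ[u,u^{-1}]$ as $\ZZ[u,u^{-1}]$-modules.

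Because $\K^\top_*(\cpt)$ is the unit (and in particular free) over $\ZZ[u,u^{-1}]$, we have $\Tor^{\ZZ[u,u^{-1}]}_p(\K^\top_*(BG),\K^\top_*(\cpt))=0$ for all $p\geq 1$, so the spectral sequence \eqref{SS} collapses onto the $p=0$ column. This yields a natural isomorphism
$$\pi_*(BG_+\wedge\bK^\top_\cpt)\;\cong\;\K^\top_*(BG)\otimes_{\ZZ[u,u^{-1}]}\K^\top_*(\cpt)\;\cong\;\K^\top_*(BG).$$
Under the standard convention that the external product $\K^\top_*(BG)\otimes\K^\top_*(\cpt)$ in the statement is taken over the coefficient ring $\K^\top_*(\CC)=\ZZ[u,u^{-1}]$---so that tensoring with the unit $\K^\top_*(\cpt)$ does nothing---the displayed isomorphism is precisely the claimed identification. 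Naturality in $G$ follows from the naturality of \eqref{SS} in the $\bK^\top_\CC$-module spectrum $BG_+\wedge\bK^\top_\CC$, together with the evident functoriality of $G\mapsto BG_+$.

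The one point that merits care is the compatibility of the Morita equivalence with the $\bK^\top_\CC$-module structure at the level of symmetric spectra (or $\SS$-modules), i.e., that the equivalence $\bK^\top_\CC\simeq\bK^\top_\cpt$ is one of $\bK^\top_\CC$-modules rather than merely of underlying spectra; otherwise the $\Tor$ computation cannot be carried out in the category in which the spectral sequence lives. Given the symmetric-spectrum-level functoriality supplied by Theorem~B of \cite{JoaKHom} and the cofibrant-replacement set-up recalled above, this coherence is essentially built into the construction, and no further analytic input is required beyond the collapse of \eqref{SS}.
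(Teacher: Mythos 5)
Your proof takes essentially the same route as the paper's one-line argument: specialize spectral sequence \eqref{SS} to $A=\cpt$, identify $\K^\top_*(\cpt)\cong\ZZ[u,u^{-1}]$ via the corner embedding, and observe that freeness forces $\Tor^{\ZZ[u,u^{-1}]}_p$ to vanish for $p\geq 1$, so the sequence collapses at $E^2$. You spell out and correctly handle the two points the paper leaves implicit, namely that the undecorated $\otimes$ in the statement must be read over the coefficient ring $\ZZ[u,u^{-1}]$ (or equivalently as a $\ZZ/2$-graded tensor over $\ZZ$, which coincides here since $\K^\top_1(\cpt)=0$), and that the corner-embedding equivalence $\bK^\top_\CC\simeq\bK^\top_\cpt$ is one of $\bK^\top_\CC$-module spectra, which is what licenses the $\Tor$ computation in the category where \eqref{SS} lives.
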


\begin{proof}
One knows that $\KT_*(\CC)\cong\KT_*(\cpt)\simeq\ZZ[u,u^{-1}]$ via the corner embedding $\CC\hookrightarrow\cpt$. The assertion is now evident from the above spectral sequence \eqref{SS}.
\end{proof}

\begin{rem}
Setting $R=\bK_\CC$, $M= BG_+\wedge\bK_\CC$ and $N=\bK_A$ in the spectral sequence \eqref{mainSS}, we get $$E^2_{p,q}=\Tor^{\K_*(\CC)}_{p,q}(\H_*(BG;\bK_\CC),\K_*(A))\Rightarrow \pi_{p+q}(BG_+\wedge\bK_A)=\H_{p+q}(BG;\bK_A).$$
\end{rem}

Apart from the standard Atiyah--Hirzebruch spectral sequences, these K{\"u}nneth type spectral sequences can potentially be useful for computational purposes for the domain of the Davis--L{\"u}ck assembly map in certain situations (compare \cite{RosSch1}).

\section{Assembly maps with coefficients in topological algebras}
We are going to use certain permanence properties to establish the equivalence between the Davis--L{\"u}ck and the Baum--Connes assembly maps with coefficients in $\cpt$. A key ingredient for us will be the Chabert--Echterhoff--Oyono-Oyono going-down mechanism \cite{CEO}. One obtains very refined permanence properties by localization of triangulated categories following Meyer--Nest \cite{MeyNes}, which might be helpful for further generalizations. 

Let $A$ be a separable and unital $C^*$-algebra, on which $G$ acts trivially. In this case the reduced crossed product $A\rtimes_r G$ simply becomes $A\prot C^*_r(G)$. In the sequel we denote $A\prot C^*_r(G)$ by $C^*_r(G,A)$. Recall that $C^*_r(G,A)$ is defined as a suitable completion of $C_c(G,A)=A[G]$, so that there is a canonical complex algebra homomorphism $\iota_A: A[G]\map C^*_r(G,A)$. Let $\iota_A: \bK_{A[G]}\map\bK_{C^*_r(G,A)}$ denote the induced map of $\K$-theory spectra.

\begin{rem} \label{Hextension}
By the naturality of the Loday assembly map there is a commutative diagram in the homotopy category of spectra:\beqn
\xymatrix{
BG_+\wedge\bK_{\tilde{A}}\ar[r]^{\mu^\L_{\tilde{A}}}\ar[d] &\bK_{\tilde{A}[G]}\ar[d] \\
BG_+\wedge\bK_\CC \ar[r]^{\mu^\L_\CC} & \bK_{\CC[G]},
}
\eeqn where $\tilde{A}$ is the unitization of $A$. Using excision we conclude that the induced map between the homotopy fibres $\mu^\L_A:BG_+\wedge\bK_A\map\bK_{A[G]}$ is the assembly map with coefficients in $A$. It follows that if both $\mu^\L_{\tilde{A}}$ and $\mu^\L_\CC$ are weak homotopy equivalences, then so is $\mu^\L_A$. The same argument allows us to construct the Loday assembly map with coefficients in any $\H$-unital $\QQ$-algebra, since such algebras also satisfy excision in algebraic $\K$-theory (see \cite{SusWod2}).
\end{rem}

\begin{defn} \label{loday}
We define the {\em algebraic reduced assembly map} $\mu^\alg_A: BG_+\wedge \bK_A \map \bK_{C^*_r(G,A)}$ by the following commutative diagram of spectra:
\beqn
\xymatrix{
&&\bK_{A[G]}\ar[d]^{\iota_A} \\
BG_+\wedge\bK_A \ar[rru]^{\mu^\L_A} \ar[rr]_{\mu^\alg_A} && \bK_{C^*_r(G,A)}.
}
\eeqn It follows that there is an exact triangle: \beq \label{defect}
\Cone(\mu^\L_A)\map\Cone(\mu^\alg_A)\map\Cone(\iota_A),\eeq where $\Cone$ denotes the mapping cone. 
\end{defn}

\begin{rem} \label{simplify1}
If $G$ is a discrete and torsion free group and $R$ is a unital, regular Noetherian $\QQ$-algebra, then $EG$ can be taken as a model for $E_\VC (G)$. In this case the Davis--L{\"u}ck assembly map $$(\mu^\DL_R)_*: \H^G_*(E_\VC (G);\bK_{G,R})\map \H^G_*(\textup{pt};\bK_{G,R})$$ is naturally equivalent to the Loday assembly map $(\mu^\L_R)_*$ (see Corollary 67 (ii) of \cite{LueRei}). 
\end{rem}

\noindent
We need a slight strengthening of the above observation.

\begin{lem} \label{simplify}
If $G$ is a discrete and torsion free group and $R$ is an $\H$-unital and $\K$-regular $\QQ$-algebra, then the Davis--L{\"u}ck assembly map $$(\mu^\DL_R)_*: \H^G_*(E_\VC (G);\bK_{G,R})\map \H^G_*(\textup{pt};\bK_{G,R})$$ is naturally equivalent to the Loday assembly map $(\mu^\L_R)_*:\H_*(BG;\bK_R)\map\K_*(R[G])$. 
\end{lem}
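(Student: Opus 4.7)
My plan is to reduce the assertion to Remark~\ref{simplify1} in two steps: first, replacing the $\H$-unital $\QQ$-algebra $R$ by its unitization $\tilde{R}=R\oplus\QQ$ via excision; and second, relaxing the ``unital regular Noetherian'' hypothesis used in \cite[Cor.~67(ii)]{LueRei} to mere $\K$-regularity.

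The first reduction proceeds exactly as in Remark~\ref{Hextension}. Both the Loday and the Davis--L{\"u}ck assembly maps are natural in the coefficient algebra, and both fit into fiber sequences along split extensions of $\H$-unital $\QQ$-algebras: for the Loday side this is \cite{SusWod2}; for the Davis--L{\"u}ck side it follows because the equivariant homology theory is manufactured from a functor $A\mapsto\bK_{G,A}$ that itself inherits excision (the only non-unital rings appearing are of the form $A[H]$ for subgroups $H\leq G$, which remain $\H$-unital when $A$ is). Forming the homotopy fiber of the naturality square associated to the split surjection $\tilde{R}\to\QQ$ therefore reduces the claim to the two unital $\K$-regular $\QQ$-algebras $\tilde{R}$ (which inherits $\K$-regularity from $R$ and $\QQ$ via excision) and $\QQ$ itself.

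For the second step, fix a unital $\K$-regular $\QQ$-algebra $S$ and rerun the transitivity principle that underlies \cite[Cor.~67(ii)]{LueRei}: to show that $E_\VC(G)\to EG$ is a $\bK_{G,S}$-equivalence for torsion free $G$, it suffices to check, for each $V\in\VC$, that the assembly map for $V$ with coefficients in $S$ is an isomorphism. Since $G$ is torsion free, $V$ is either trivial (the tautological case) or isomorphic to $\ZZ$; in the latter case the required statement is the Bass--Heller--Swan identification
\[
\K_*(S[\ZZ])\;\cong\;\K_*(S)\oplus\K_{*-1}(S),
\]
in which the two Nil summands vanish precisely because $S$ is $\K$-regular. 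Since regularity and Noetherianness are invoked in \cite{LueRei} only to secure this Nil-vanishing, the rest of the argument there --- including the identifications $\H^G_*(EG;\bK_{G,S})\cong\H_*(BG;\bK_S)$ and $\H^G_*(\mathrm{pt};\bK_{G,S})\cong\K_*(S[G])$, and the resulting comparison of the two assembly maps --- carries over verbatim.

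The step I would verify with the most care is the Bass--Heller--Swan decomposition in the $\K$-regular but possibly non-Noetherian setting, together with its compatibility with the equivariant homology theory $\H^{\ZZ}_*(-;\bK_{\ZZ,S})$ needed by the transitivity principle; once these are in place, the remainder of the proof is merely a reshuffling of excision and the Davis--L{\"u}ck machinery.
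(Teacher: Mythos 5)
Your argument is correct and follows essentially the same route as the paper's: reduce to the unital case by excision, then observe that the change-of-families isomorphism from $E_\cF(G)$ to $E_\VC(G)$ reduces, for torsion free $G$, to the vanishing of the Bass Nil-terms $\NK_*(R)$ appearing in the Bass--Heller--Swan decomposition for $\ZZ$, which is guaranteed by $\K$-regularity of $R$. The paper phrases the second step as ``the argument in the proof of Proposition 70 of \cite{LueRei} applies, and one only needs vanishing of the Nil-terms,'' whereas you unwind the transitivity principle explicitly --- same content, just spelled out; the one slip is that the $G$-map runs $E_\cF(G)=EG\to E_\VC(G)$, not the other way around.
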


\begin{proof}
By excision in algebraic $\K$-theory we may assume that $R$ is unital. There is a commutative diagram relating the assembly maps $(\mu^\L_R)_*$ and $(\mu^\DL_R)_*$

\beqn
\xymatrix{
\H^G_*(E_\cF(G);\bK_{G,R})\cong\H_*(BG;\bK_R)\ar[rrd]^{(\mu^\L_R)_*}\ar[d]\\
\H^G_*(E_\VC(G);\bK_{G,R})\ar[rr]^{(\mu^\DL_R)_*} && \H^G_*(\textup{pt};\bK_{G,R})\cong\K_*(R[G]).
}
\eeqn Here the left vertical arrow is induced by the change of families morphism $\cF\subset\VC$. Indeed, by the universal property of $E_\VC(G)$ there is a map $E_\cF(G)\map E_\VC(G)$, which is well-defined up to $G$-homotopy. Therefore, it suffices to show that the left vertical arrow is an isomorphism. Now the argument in the proof of Proposition 70 of \cite{LueRei} applies. A careful inspection of the argument reveals that one only needs the vanishing of the Nil-terms for all group algebras $R[H]$, where $H\subset G$ is a finite subgroup. Since $G$ is assumed to be torsion free, this is guaranteed by the $\K$-regularity of $R$. 
\end{proof}

\begin{lem} \label{isomLem}
There are canonical isomorphisms

\begin{enumerate}
\item \label{isom1} $\H_*(BG;\bK_{G,\cpt})\overset{\cong}{\map}\H_*(BG;\bK^\top_{G,\cpt})$ induced by the change of theory morphism from the algebraic to topological $\K$-theory, where $\cpt$ is equipped with trivial $G$-action, and

\item \label{isom2} $\H_*(BG;\bK^\top_{G,\CC})\overset{\cong}{\map} \H_*(BG;\bK^\top_{G,\cpt})$ induced by the algebra homomorphism $\CC\map\cpt$.
\end{enumerate}
\end{lem}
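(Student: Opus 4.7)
The plan is to reduce each of the two isomorphisms to a weak equivalence at the level of coefficient spectra, and then to appeal to the fact that smashing with (a CW model of) $BG_+$ preserves weak equivalences between cofibrant spectra. Since $G$ is torsion free and $\cpt$ (respectively $\CC$) carries the trivial $G$-action, the identifications recorded in Remark \ref{simplify1} and Lemma \ref{simplify} allow us to rewrite
$$\H_*(BG;\bK_{G,\cpt})\cong\pi_*(BG_+\wedge\bK_\cpt),\qquad \H_*(BG;\bK^\top_{G,\cpt})\cong\pi_*(BG_+\wedge\bK^\top_\cpt),$$
and likewise for $\bK^\top_{G,\CC}$. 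Both assertions of the lemma thereby become statements about $\pi_*$ of a smash product with $BG_+$.

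For part (\ref{isom1}), I would invoke the Suslin--Wodzicki theorem, which asserts that the canonical change-of-theory comparison map $\bK_\cpt\map\bK^\top_\cpt$ induces an isomorphism $\K_*(\cpt)\cong\KT_*(\cpt)\cong\ZZ[u,u^{-1}]$; this map is therefore a weak equivalence of spectra. Smashing with $BG_+$ produces the desired isomorphism. For part (\ref{isom2}), the corner embedding $\CC\hookrightarrow\cpt$ induces an isomorphism $\KT_*(\CC)\cong\KT_*(\cpt)$ by the stability (Morita invariance) of topological $\K$-theory, as already recorded in the proof of Proposition \ref{domain}. Hence the induced map of spectra $\bK^\top_\CC\map\bK^\top_\cpt$ is a weak equivalence, and smashing with $BG_+$ once more yields the conclusion.

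The main technical ingredient is the Suslin--Wodzicki identification $\K_*(\cpt)\cong\KT_*(\cpt)$ that drives part (\ref{isom1}); everything else is formal, relying on the module-spectrum framework already set up in Section \ref{SpecSeq}. The small point requiring care is the systematic use of the torsion-free hypothesis and the triviality of the $G$-action on the coefficient algebra, which are precisely what let us pass from the equivariant spectra $\bK_{G,?}$ to the non-equivariant spectra $\bK_?$ before comparing them. No spectral-sequence argument is strictly required, though the K\"unneth sequence \eqref{mainSS} (with $R=\bK^\top_\CC$ and suitable choices of $M,N$) furnishes an alternative route via the naturality of the $\Tor$ functor on the $E^2$-page.
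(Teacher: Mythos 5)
Your argument is correct, and for part~(\ref{isom1}) it coincides with the paper's: both reduce to the statement that the comparison map $\bK_\cpt \to \bK^\top_\cpt$ is a weak equivalence (because $\cpt$ is stable; this is the Karoubi conjecture as proved by Suslin--Wodzicki) and then smash with $BG_+$.

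For part~(\ref{isom2}) you take a genuinely different, though equally valid, route. The paper does \emph{not} pass to the non-equivariant picture; instead it works with the $\Or(G)$-module spectra directly, notes that the map $\bK^\top_{G,\CC} \to \bK^\top_{G,\cpt}$ evaluated at each orbit $G/H$ is $\bK^\top_{C^*_r(H)} \to \bK^\top_{C^*_r(H,\cpt)}$ and is a $\pi_*$-isomorphism by $C^*$-stability of topological $\K$-theory, and then invokes the local-to-global principle (Theorem~6.3 of \cite{DavLue}) to conclude the isomorphism on the associated $G$-homology theory. Your route instead pushes everything to $\pi_*(BG_+ \wedge \bK^\top_?)$ first and then uses the weak equivalence $\bK^\top_\CC \to \bK^\top_\cpt$ from the corner embedding. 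Both are fine here because $G$ is torsion free, but note that the paper's argument (checking on all $G/H$) is the one that would survive the torsion case verbatim, while yours hinges on torsion-freeness to reduce to the single orbit $G/\{e\}$. One small imprecision: you justify the identifications $\H_*(BG;\bK^\top_{G,A}) \cong \pi_*(BG_+ \wedge \bK^\top_A)$ by citing Remark~\ref{simplify1} and Lemma~\ref{simplify}, but those are phrased for algebraic $\K$-theory and regular/$\K$-regular $\QQ$-algebra coefficients; for the topological case one should instead observe directly that, for torsion free $G$ and trivial $G$-action on $A$, one has $E_\cF(G) = EG$ and $\H^G_*(EG;\bK^\top_{G,A}) = \pi_*(EG_+\wedge_G \bK^\top_A) = \pi_*(BG_+\wedge \bK^\top_A)$, with no regularity hypothesis needed.
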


\begin{proof}
For \eqref{isom1} we first reduce the problem to establishing the isomorphism $\H_*(BG;\bK_\cpt)\overset{\cong}{\map}\H_*(BG;\bK^\top_\cpt)$ by invoking Lemma \ref{simplify}. Since $\cpt$ is stable, the change of theory morphism $\bK_\cpt\map\bK^\top_\cpt$ is a homotopy equivalence whence the assertion follows.

\noindent
For \eqref{isom2} observe that, for every subgroup $H\subset G$, by construction $\H^G_*(G/H;\bK^\top_{G,\CC})\cong \KT_*(C^*_r(H))$ and $\H^G_*(G/H;\bK^\top_{G,\cpt})\cong\KT_*(C^*_r(H,\cpt))$. Due to $C^*$-stability of topological $\K$-theory, there is an induced isomorphism $\KT_*(C^*_r(H))\cong\KT_*(C^*_r(H,\cpt))$. The assertion now follows from Theorem 6.3 of \cite{DavLue}.
\end{proof}

Let us set $\K^\top_*(G;A)=\KK^G_*(\underline{E}G;A)$, where $\underline{E}G$ is the universal proper $G$-space, which is uniquely characterized up to $G$-homotopy. If $G$ is torsion free, one may take $\underline{E}G= EG$. The classifying space  $\underline{E}G$ may not be locally compact. So one defines $\KK^G_*(\underline{E}G;A) =\varinjlim_C \KK^G_*(C_0(X),A)$, where $C\subset \underline{E}G$ runs through the set of all $G$-compact subspaces canonically ordered by inclusion. Let $X$ be a locally compact proper $G$-space and let $A$ be a $G$-$C^*$-algebra (with not necessarily trivial $G$-action). For any $C^*$-algebra $B$ with trivial $G$-action, there is a canonical homomorphism $$\alpha_X:\KK_*^G(C_0(X),A)\otimes\K^\top_*(B)\map \KK^G_*(C_0(X),A\prot B),$$ which is constructed by first identifying $\K^\top_*(B)\cong\KK^G_*(\CC,B)$ and then applying Kasparov product $\otimes_\CC$. This map is compatible with inclusions of $G$-compact subspaces of $\underline{E}G$ and hence defines a morphism $$\alpha_G:\K^\top_*(G;A)\otimes\K^\top_*(B)\map \K^\top_*(G;A\prot B).$$

 In \cite{CEO} the authors define the following class $\cN_G$ of $G$-$C^*$-algebras: $A\in\cN_G$ if and only if the above morphism $\alpha_G$ is an isomorphism for every $C^*$-algebra $B$ with trivial $G$-action, such that $\K^\top_*(B)$ is torsion free. The class $\cN_G$ is fairly large; for instance, it contains all type I $C^*$-algebras and if $A\in \cN_G$ and $B$ is $\KK^G$-equivalent to $A$, then $B\in\cN_G$ (see Lemma 4.7 and Theorem 0.1 of \cite{CEO}). Clearly, $\cpt\in\cN_G$.

 \noindent
 Let $A$ be a $G$-$C^*$-algebra with trivial $G$-action. It follows from the commutative diagram in Section 1.6 (page 47) of \cite{BEL} that one has the following commutative diagram:
 \beq \label{DLcom}
\xymatrix{
\H^G_*(E_\cF (G);\bK_{G,A})\ar[d]_{}\ar[rr]^{(\mu^\DL_A)_*} && \K_*(A[G]) \ar[d]^{}\\
\H^G_*(E_\cF (G);\bK^\top_{G,A})\ar[rr]^{(\mu^\DL_A)_*} && \KT_*(C^*_r(G,A)),
}\eeq where the top (resp. bottom) horizontal arrow $(\mu^\DL_A)_*$ denotes the Davis--L{\"u}ck assembly map in algebraic (resp. topological) $\K$-theory induced by the pointed $G$-map $E_\cF(G)_+\map S^0$. Putting $A=\cpt$ with trivial $G$-action, where $G$ is a discrete and torsion free group, we get
\beq
\xymatrix{
\H_*(BG;\bK_{G,\cpt})\ar[d]_{\cong}^{}\ar[rr]^{(\mu^\DL_\cpt)_*} && \K_*(\cpt[G]) \ar[d]^{c(C^*_r(G,\cpt))_*\circ (\iota_\cpt)_*}\\
\H_*(BG;\bK^\top_{G,\cpt})\ar[rr]^{(\mu^\DL_\cpt)_*} && \KT_*( C^*_r(G,\cpt)),
}\eeq where the left vertical arrow is an isomorphism (cf. Lemma \ref{isomLem} part \eqref{isom1}). Now we use the fact that $\cpt$ is $\K$-regular \cite{RosComparison}, to replace $(\mu^\DL_\cpt)_*$ by $(\mu^\L_\cpt)_*$ (cf. Lemma \ref{simplify}) and obtain
 \beq \label{0}
\xymatrix{
&&\K_*(\cpt[G])\ar[d]^{(\iota_\cpt)_*}\\
\H_*(BG;\bK_\cpt)\ar[d]_{\cong}^{}\ar[rru]^{(\mu^\L_\cpt)_*} \ar[rr]^{(\mu^\alg_\cpt)_*}&& \K_*(C^*_r(G,\cpt)) \ar[d]^{c(C^*_r(G,\cpt))_*}\\
\H_*(BG;\bK^\top_{G,\cpt})\ar[rr]^{(\mu^\DL_\cpt)_*} && \KT_*( C^*_r(G,\cpt))
}\eeq The Hambleton--Pedersen result on the equivalence of $\mu^\DL_*$ with $\mu^\BC_*$ \cite{HamPed} is encapsulated in the following commutative diagram:
\beq \label{1}
\xymatrix{
\KT_*(BG)\cong\H_*(BG;\bK^\top_\CC)\ar[d]_{\cong}\ar[rr]^{\mu^\BC_*} && \KT_*(C^*_r(G)) \ar[d]^{\cong}\\
\H_*(BG;\bK^\top_{G,\CC})\ar[rr]^{\mu^\DL_*} && \KT_*( C^*_r(G)),
}\eeq where the vertical maps are isomorphisms. The naturality of the Davis--L{\"u}ck assembly map produces the following commutative diagram:
\beq \label{2}
\xymatrix{
\H_*(BG;\bK^\top_{G,\CC})\ar[d]_\cong\ar[rr]^{\mu^\DL_*} && \KT_*( C^*_r(G))\ar[d]^\cong\\
\H_*(BG;\bK^\top_{G,\cpt})\ar[rr]^{(\mu^\DL_\cpt)_*} && \KT_*(C^*_r(G,\cpt)),
}
\eeq where the left vertical arrow is an isomorphism (cf. Lemma \ref{isomLem} part \eqref{isom2}). From diagrams \eqref{1} and \eqref{2} we get the following commutative diagram:
\beq \label{3}
\xymatrix{
\H_*(BG;\bK^\top_\CC)\ar[d]_\cong\ar[rr]^{\mu^\BC_*} && \KT_*( C^*_r(G))\ar[d]^\cong\\
\H_*(BG;\bK^\top_{G,\cpt})\ar[rr]^{(\mu^\DL_\cpt)_*} && \KT_*(C^*_r(G,\cpt))
}\eeq We tensor the diagram \eqref{3} with $\KT_*(\cpt)$ over $\KT_*(\CC)$ and make the following simplifications:
\begin{enumerate}
\item We identify $\mu^\BC_*\otimes\id: \KT_*(BG)\otimes\KT_*(\cpt)\map\KT_*(C^*_r(G))\otimes\KT_*(\cpt)$ with the Baum--Connes assembly map with coefficients in the $G$-$C^*$-algebra $\cpt$ with trivial $G$-action $(\mu^\BC_\cpt)_*:\KT_*(BG;\cpt)\map\KT_*(C^*_r(G,\cpt))$ using Proposition 4.9 of \cite{CEO}.

\item Since $\KT_*(\cpt)$ is torsion free we use K{\"u}nneth formula in topological $\K$-theory \cite{RosSch} to identify $\KT_*(C^*_r(G,\cpt))\otimes\KT_*(\cpt)\cong\KT_*(C^*_r(G,\cpt))$. Now from Proposition \ref{domain} we conclude that $$\H_*(BG;\bK^\top_\cpt)\otimes \KT_*(\cpt)\cong\H_*(BG;\bK^\top_\cpt).$$ Observe that $\H_*(BG;\bK^\top_\cpt)\cong\KT_*(BG)$ via the homotopy equivalence $\bK^\top_\CC\overset{\sim}{\map} \bK^\top_\cpt$ induced by the $C^*$-algebra homomorphism $\CC\map\cpt$. Therefore the bottom horizontal arrow in the diagram \eqref{3} does not change.
\end{enumerate}

\noindent
Thus we have proved the following result:

\begin{prop} \label{BCDL}
There is a commutative diagram:
\beq \label{4}
\xymatrix{
\KT_*(BG;\cpt)\ar[d]_\cong\ar[rr]^{(\mu^\BC_\cpt)_*} && \KT_*( C^*_r(G,\cpt))\ar[d]^\cong \\
\H_*(BG;\bK^\top_{G,\cpt})\ar[rr]^{(\mu^\DL_\cpt)_*} && \KT_*(C^*_r(G,\cpt))
}\eeq expressing the equivalence of the Davis--L{\"u}ck assembly map and the Baum--Connes assembly map with coefficients in $\cpt$, when $G$ acts on it trivially.

\end{prop}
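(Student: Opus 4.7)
The plan is to take the commutative square \eqref{3}, which relates the Baum--Connes assembly map $\mu^\BC_*$ (with $\CC$-coefficients) on the top row with the Davis--Lück assembly map $(\mu^\DL_\cpt)_*$ (with $\cpt$-coefficients, trivial action) on the bottom row via the natural maps induced by $\CC\hookrightarrow\cpt$, and apply the functor $-\otimes_{\KT_*(\CC)}\KT_*(\cpt)$ to it. The resulting square should, after a sequence of canonical identifications, become precisely the desired diagram \eqref{4}.

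First I would analyse the top row after tensoring. The domain $\KT_*(BG)\otimes_{\KT_*(\CC)}\KT_*(\cpt)$ is identified with $\KT_*(BG;\cpt)$ via the morphism $\alpha_G$, and this identification turns $\mu^\BC_*\otimes\id$ into the Baum--Connes assembly map with $\cpt$-coefficients $(\mu^\BC_\cpt)_*$; this is exactly the content of Proposition 4.9 of \cite{CEO}, applicable because $\cpt\in\cN_G$ and $\KT_*(\cpt)\cong\ZZ[u,u^{-1}]$ is torsion free. For the codomain I would invoke the Künneth formula in topological $\K$-theory of \cite{RosSch}, which gives $\KT_*(C^*_r(G))\otimes_{\KT_*(\CC)}\KT_*(\cpt)\cong\KT_*(C^*_r(G)\prot\cpt)=\KT_*(C^*_r(G,\cpt))$, again using the torsion-freeness of $\KT_*(\cpt)$. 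These two identifications produce the top row of \eqref{4}.

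Next I would argue that the bottom row is essentially unchanged by the tensoring operation. By Proposition \ref{domain} there is a natural identification $\H_*(BG;\bK^\top_\cpt)\otimes\KT_*(\cpt)\cong\H_*(BG;\bK^\top_\cpt)$, and via the homotopy equivalence $\bK^\top_\CC\overset{\sim}{\to}\bK^\top_\cpt$ induced by $\CC\to\cpt$ (together with Lemma \ref{isomLem}\eqref{isom2}) this groups are identified with $\H_*(BG;\bK^\top_{G,\cpt})$. Symmetrically, $\KT_*(C^*_r(G,\cpt))\otimes\KT_*(\cpt)\cong\KT_*(C^*_r(G,\cpt))$ by Künneth. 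Under these identifications the bottom horizontal arrow stays $(\mu^\DL_\cpt)_*$, while the vertical isomorphisms in \eqref{3} remain vertical isomorphisms in \eqref{4}.

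The main obstacle, as I see it, is the bookkeeping required to verify that the several naturality squares cooperate: one must check that the identification of $\mu^\BC_*\otimes\id$ with $(\mu^\BC_\cpt)_*$ furnished by \cite{CEO} is genuinely compatible with the Davis--Lück-to-Baum--Connes comparison of \cite{HamPed} used in \eqref{1}, and with the Künneth and change-of-coefficient isomorphisms in the codomain. In other words, one has to confirm that no ambiguity arises from the fact that both sides of the tensoring come with their own $\KT_*(\CC)$-module structures, and that the Kasparov product $\otimes_\CC$ used to build $\alpha_G$ matches the $\K^\top_\CC$-module multiplication used to build the spectral sequence in Section \ref{SpecSeq}. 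Once this compatibility is in place, the desired diagram \eqref{4} emerges directly from \eqref{3}.
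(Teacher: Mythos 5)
Your proposal takes exactly the route the paper takes: tensor diagram \eqref{3} with $\KT_*(\cpt)$ over $\KT_*(\CC)$, identify $\mu^\BC_*\otimes\id$ with $(\mu^\BC_\cpt)_*$ via Proposition~4.9 of \cite{CEO} and $\alpha_G$, use the Künneth formula in topological $\K$-theory (torsion-freeness of $\KT_*(\cpt)$) on the codomain, and apply Proposition~\ref{domain} to see the bottom row is unchanged. The paper does not dwell on the bookkeeping compatibility you flag at the end; it treats those identifications as canonical and concludes directly.
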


\noindent
Combining the diagram \eqref{0} with the above Proposition, we get

\begin{lem} \label{whitehead}
There is a commutative diagram:

\beq \label{5}
\xymatrix{
&& \K_*(\cpt[G])\ar[d]^{(\iota_\cpt)_*}\\
\H_*(BG;\bK_\cpt)\ar[rru]^{(\mu^\L_\cpt)_*}\ar[d]_{\cong}^{}\ar[rr]^{(\mu^\alg_\cpt)_*} && \K_*(C^*_r(G,\cpt)) \ar[d]^{c(C^*_r(G,\cpt))_*}_\cong\\
\KT_*(BG;\cpt)\ar[rr]^{(\mu^\BC_\cpt)_*} && \KT_*( C^*_r(G,\cpt))
}\eeq Observe that, by definition, $(\mu^\alg_\cpt)_*=(\iota_\cpt)_*\circ (\mu^\L_\cpt)_*$. This diagram expresses the equivalence between $(\mu^\alg_\cpt)_*$ and $(\mu^\BC_\cpt)_*$.

\end{lem}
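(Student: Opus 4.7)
The plan is to obtain \eqref{5} by pasting the diagram \eqref{0} on top of the commutative square furnished by Proposition \ref{BCDL}. The upper triangle of \eqref{5}, expressing $(\mu^\alg_\cpt)_* = (\iota_\cpt)_* \circ (\mu^\L_\cpt)_*$, is immediate from Definition \ref{loday}, so the substance of the assertion lies in the lower rectangle.

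First I would isolate the bottom rectangle of \eqref{0}: a commutative square whose top row is $(\mu^\alg_\cpt)_*$, whose bottom row is the topological Davis--Lück assembly map $(\mu^\DL_\cpt)_*$ (with coefficients in $\cpt$), whose left vertical arrow is the isomorphism of Lemma \ref{isomLem}\eqref{isom1}, and whose right vertical arrow is $c(C^*_r(G,\cpt))_*$. Next I would paste below this the square of Proposition \ref{BCDL}, whose top row is $(\mu^\DL_\cpt)_*$ and whose bottom row is the Baum--Connes assembly map $(\mu^\BC_\cpt)_*$, with both vertical arrows being isomorphisms. Collapsing the middle row of isomorphisms of the resulting stacked diagram produces exactly the lower rectangle of \eqref{5}.

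The only substantive verification concerns the right vertical arrow $c(C^*_r(G,\cpt))_*$, which must be shown to be an isomorphism. This holds because $C^*_r(G,\cpt) = C^*_r(G)\prot\cpt$ is a stable $C^*$-algebra, so the canonical comparison map from algebraic to topological $\K$-theory is a weak equivalence---precisely the argument invoked in the proof of Lemma \ref{isomLem}\eqref{isom1}. I expect the main obstacle to be purely bookkeeping: one must verify that the left vertical isomorphism of Proposition \ref{BCDL} (constructed by tensoring the Hambleton--Pedersen diagram \eqref{3} with $\KT_*(\cpt)$ over $\KT_*(\CC)$) composes coherently with the isomorphism of Lemma \ref{isomLem}\eqref{isom1} to yield the stated identification $\H_*(BG;\bK_\cpt) \cong \KT_*(BG;\cpt)$ on the left of \eqref{5}. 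Once this coherence is confirmed, \eqref{5} commutes and the equivalence between $(\mu^\alg_\cpt)_*$ and $(\mu^\BC_\cpt)_*$ follows at once.
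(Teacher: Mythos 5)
Your proposal is correct and matches the paper's proof exactly: the paper introduces the lemma with the single phrase "Combining the diagram \eqref{0} with the above Proposition, we get," and the pasting you describe (with the upper triangle coming from Definition \ref{loday}, the middle square from \eqref{0}, the bottom square from Proposition \ref{BCDL}, and the upgrade of $c(C^*_r(G,\cpt))_*$ to an isomorphism via stability of $C^*_r(G,\cpt)$) is precisely the implicit content. Your caution about coherence of the left-hand isomorphisms is harmless but unnecessary, since they are simply composed.
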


\begin{thm} \label{cpt}
Let $G$ be a discrete and torsion free group. If $G$ satisfies the integral $\KT$-theoretic (resp. split $\KT$-theoretic) Novikov conjecture with $\CC$-coefficients, then $G$ and $\cpt$ satisfy the integral $\K$-theoretic (resp. split $\K$-theoretic) Novikov conjecture, i.e., the Loday assembly map $(\mu^\L_\cpt)_*:\H_*(BG;\bK_\cpt)\map\K_*(\cpt[G])$ is injective (resp. split injective).

\noindent
Furthermore, if $\mu^\BC_*$ is only rationally injective then so is $(\mu^\L_\cpt)_*$.
\end{thm}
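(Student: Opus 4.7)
The plan is to exploit Lemma \ref{whitehead}, whose diagram \eqref{5} identifies the algebraic reduced assembly map $(\mu^\alg_\cpt)_* = (\iota_\cpt)_* \circ (\mu^\L_\cpt)_*$ with the Baum--Connes assembly map with $\cpt$-coefficients $(\mu^\BC_\cpt)_*$, via isomorphisms on source and target. The strategy is three-fold: (i) upgrade the hypothesis on $\mu^\BC_*$ to the analogous property of $(\mu^\BC_\cpt)_*$; (ii) transport this property across \eqref{5} to the composition $(\iota_\cpt)_* \circ (\mu^\L_\cpt)_*$; and (iii) extract the property for $(\mu^\L_\cpt)_*$ from the fact that the first factor of a (split/rationally) injective composition is itself (split/rationally) injective.

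For step (i), the simplifications made immediately before Proposition \ref{BCDL} provide a canonical identification
$$
(\mu^\BC_\cpt)_* \;\cong\; \mu^\BC_* \otimes_{\KT_*(\CC)} \KT_*(\cpt),
$$
which uses $\cpt \in \cN_G$ (Proposition 4.9 of \cite{CEO}) together with the K\"unneth theorem of \cite{RosSch} (applicable because $\KT_*(\cpt)$ is torsion-free). Since $\KT_*(\cpt) \cong \ZZ[u,u^{-1}]$ is free as an abelian group, hence flat over $\ZZ$, tensoring preserves injectivity, preserves the existence of a retraction (split injectivity), and commutes with $\otimes\QQ$ (rational injectivity). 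Thus each of the three variants of the hypothesis on $\mu^\BC_*$ lifts to the corresponding statement about $(\mu^\BC_\cpt)_*$.

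Steps (ii) and (iii) are then essentially formal: diagram \eqref{5} immediately transfers the property to $(\iota_\cpt)_* \circ (\mu^\L_\cpt)_*$, and in each case a left factor inherits the relevant injectivity from a composition (in the split case, if $s$ splits $g \circ f$ then $s \circ g$ splits $f$; for rational injectivity one tensors the same argument with $\QQ$). This yields all three conclusions for $(\mu^\L_\cpt)_*$.

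The main obstacle is not in this final argument but already absorbed into the preceding infrastructure. The nontrivial inputs are the $\K$-regularity of $\cpt$ (used via Lemma \ref{simplify} to replace $(\mu^\DL_\cpt)_*$ by $(\mu^\L_\cpt)_*$), the Hambleton--Pedersen comparison of $\mu^\DL_*$ with $\mu^\BC_*$, the verification that $\cpt \in \cN_G$ needed in step (i), and the two isomorphisms of Lemma \ref{isomLem} (one algebraic-to-topological, one $\CC \to \cpt$) that allow identification of the vertical arrows in diagram \eqref{5} as isomorphisms. Once these are granted, the theorem is a clean diagram chase; I anticipate no further technical difficulties beyond bookkeeping of which flavor of injectivity is being tracked.
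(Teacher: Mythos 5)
Your proof follows essentially the same route as the paper's: upgrade the hypothesis on $\mu^\BC_*$ to $(\mu^\BC_\cpt)_*$ via Proposition 4.9 of \cite{CEO} and flatness of $\KT_*(\cpt)$, then transfer through diagram \eqref{5} of Lemma \ref{whitehead} and peel off the left factor of the composition $(\mu^\alg_\cpt)_* = (\iota_\cpt)_* \circ (\mu^\L_\cpt)_*$. Your presentation is a bit more explicit about the last step and about the three flavors of injectivity, but the ingredients, key lemmas, and logical structure coincide with the paper's proof.
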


\begin{proof}
For any $G$-$C^*$-algebra $A$ and any other $C^*$-algebra $B$ with a trivial $G$-action the authors of \cite{CEO} construct the following commutative diagram in Proposition 4.9:

\beqn
\xymatrix{
\KT_*(BG;A)\otimes \KT_*(B)\ar[d]_{\alpha_G}\ar[rr]^{(\mu^\BC_{A}\otimes\id)_*} && \KT_*(A\rtimes_r G)\otimes \KT_*(B)\ar[d] \\
\KT_*(BG;A\prot B) \ar[rr]^{(\mu^\BC_{A\prot B})_*} && \KT_*( (A\prot B)\rtimes_r G)
}
\eeqn where the right vertical arrow is the K{\"u}nneth map in topological $\K$-theory. Putting $A=\CC$ and $B=\cpt$ we get
\beqn
\xymatrix{
\KT_*(BG)\otimes \KT_*(\cpt)\ar[d]_{\alpha_G}^{\cong}\ar[rr]^{(\mu^\BC\otimes\id)_*} && \KT_*(C^*_r(G))\otimes \KT_*(\cpt)\ar[d]^{\cong}\\
\KT_*(BG;\cpt) \ar[rr]^{(\mu^\BC_{\cpt})_*} && \KT_*(C^*_r(G,\cpt))
}
\eeqn Since $\KT_*(\cpt)$ is torsion free and $\cpt$ is in the bootstrap class both vertical arrows are isomorphisms. Using flatness of $\KT_*(\cpt)$ we conclude that if $\mu^\BC_*$ is (split) injective, then so is $(\mu^\BC\otimes\id)_*\cong (\mu^\BC_\cpt)_*$. The assertion now follows from the previous Lemma.

The statement about the rational injectivity of $(\mu^\L_\cpt)_*$ is obvious.
\end{proof}

\section{Algebraic $\K$-theory of certain group algebras}

Recall that the homotopy cofibre of the Loday assembly map $\mu^\L_R: BG_+\wedge\bK_R\map \bK_{R[G]}$ is defined to be the {\em Whitehead spectrum of $G$ over $R$}, and its homotopy groups are called the {\em Whitehead groups of $G$ over $R$}. By excision in algebraic $\K$-theory this notion carries over to $\H$-unital coefficient $\QQ$-algebras (see Remark \ref{Hextension}).

\begin{lem} \label{obstruction}
Let $G$ satisfy the Baum--Connes conjecture with $\CC$-coefficients. Then the obstruction to the algebra homomorphism $\cpt[G]\map C^*_r(G,\cpt)$ inducing a weak homotopy equivalence at the level of nonconnective algebraic $\K$-theory spectra lies in the Whitehead spectrum of $G$ over $\cpt$ (up to a shift).
\end{lem}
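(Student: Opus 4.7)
The plan is to chain together three ingredients already established in the paper: the exact triangle \eqref{defect} from Definition \ref{loday}, the equivalence $(\mu^\alg_\cpt)_* \leftrightarrow (\mu^\BC_\cpt)_*$ supplied by Lemma \ref{whitehead}, and a K\"unneth-type argument (the same one used in the proof of Theorem \ref{cpt}) to promote the Baum--Connes hypothesis from $\CC$-coefficients to $\cpt$-coefficients with trivial $G$-action.

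Concretely, I first argue that $(\mu^\BC_\cpt)_*$ is an isomorphism under the assumption that $\mu^\BC_*$ is. Applying the commutative square of Proposition 4.9 of \cite{CEO} with $A=\CC$ and $B=\cpt$, both vertical arrows are isomorphisms: the left one because $\cpt\in\cN_G$, and the right one because $\KT_*(\cpt)$ is torsion free and $\cpt$ lies in the bootstrap class, so the topological K\"unneth formula of \cite{RosSch} applies. Because $\KT_*(\cpt)\cong\ZZ[u,u^{-1}]$ is flat over $\ZZ$, tensoring preserves the isomorphism $\mu^\BC_*$, so $(\mu^\BC\otimes\id)_*\cong(\mu^\BC_\cpt)_*$ is an isomorphism. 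Lemma \ref{whitehead} then identifies $(\mu^\alg_\cpt)_*$ with $(\mu^\BC_\cpt)_*$ via vertical weak equivalences in diagram \eqref{5}, so $\mu^\alg_\cpt$ is a weak homotopy equivalence of spectra and $\Cone(\mu^\alg_\cpt)\simeq\ast$.

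Finally, I rotate the exact triangle \eqref{defect}
$$\Cone(\mu^\L_\cpt)\longrightarrow\Cone(\mu^\alg_\cpt)\longrightarrow\Cone(\iota_\cpt)\longrightarrow\Sigma\,\Cone(\mu^\L_\cpt)$$
and note that vanishing of the middle term forces a weak equivalence $\Cone(\iota_\cpt)\simeq\Sigma\,\Cone(\mu^\L_\cpt)$. Since $\Cone(\mu^\L_\cpt)$ is by definition the Whitehead spectrum of $G$ over $\cpt$, and since the obstruction to $\iota_\cpt:\bK_{\cpt[G]}\to\bK_{C^*_r(G,\cpt)}$ being a weak equivalence is precisely $\Cone(\iota_\cpt)$, this obstruction agrees with the Whitehead spectrum up to a single suspension, which is the ``shift'' appearing in the statement. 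The only non-routine step is the upgrade $\mu^\BC_*\,\text{iso}\Rightarrow(\mu^\BC_\cpt)_*\,\text{iso}$, but this is formally parallel to the injective/split-injective version already carried out in Theorem \ref{cpt}, with flatness of $\KT_*(\cpt)$ doing the same work.
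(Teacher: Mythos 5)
Your proof is correct and follows essentially the same approach as the paper: identify $(\mu^\alg_\cpt)_*$ with $(\mu^\BC_\cpt)_*$ via Lemma \ref{whitehead}, deduce that $\Cone(\mu^\alg_\cpt)\simeq\ast$, and rotate the exact triangle \eqref{defect} to get $\Cone(\iota_\cpt)\simeq\Sigma\Cone(\mu^\L_\cpt)$. The only cosmetic difference is that you rederive the implication ``$\mu^\BC_*$ iso $\Rightarrow$ $(\mu^\BC_\cpt)_*$ iso'' from the K\"unneth square of Theorem \ref{cpt} --- with a small slip: the left vertical arrow there is an isomorphism because the coefficient algebra $\CC$ lies in $\cN_G$ and $\KT_*(\cpt)$ is torsion free, not because $\cpt\in\cN_G$ --- whereas the paper simply cites Corollary 5.2 of \cite{CEO}.
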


\begin{proof}
Since the bijectivity of $\mu^\BC_*$ implies that of $(\mu^\BC_\cpt)_*$ (see Corollary 5.2 of \cite{CEO}) and the algebraic reduced assembly map $(\mu^\alg_\cpt)_*$ at the level of homotopy groups agrees with $(\mu^\BC_\cpt)_*$ (see Lemma \ref{whitehead}), the assertion follows from \eqref{defect}.
\end{proof}

\begin{thm} \label{mySW}
Let $G$ be a discrete and torsion free Gromov hyperbolic group and let $G$ act on $\cpt$ trivially. Then the canonical algebra homomorphism $\iota_\cpt: \cpt[G]\map C^*_r(G,\cpt)$ induces a weak homotopy equivalence between their nonconnective algebraic $\K$-theory spectra, and the Whitehead groups of $G$ over $\cpt$ vanish.
\end{thm}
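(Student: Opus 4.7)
The plan is to deduce both assertions of Theorem \ref{mySW} by combining two deep external inputs for torsion-free Gromov hyperbolic groups: the K-theoretic Farrell--Jones conjecture (Bartels--L\"uck--Reich) and the Baum--Connes conjecture with coefficients (Mineyev--Yu, or Lafforgue). The author has already set up the machinery in Lemma \ref{whitehead} and the exact triangle \eqref{defect} so that these two conjectures together force both the Whitehead vanishing and the $\K$-theoretic equivalence of $\iota_\cpt$.

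First I would establish the vanishing of the Whitehead groups of $G$ over $\cpt$. Torsion-free Gromov hyperbolic groups satisfy the integral $\K$-theoretic Farrell--Jones conjecture, so the Loday assembly map $(\mu^\L_R)_*$ is an isomorphism for every unital ring $R$ with trivial $G$-action. Applying this to the unital $C^*$-algebras $\CC$ and $\tilde{\cpt}$ (the unitization of $\cpt$), and plugging the two resulting isomorphisms into the homotopy fibre sequence of Remark \ref{Hextension}, one concludes that $(\mu^\L_\cpt)_*$ is itself an isomorphism; here I use that $\cpt$ is $\H$-unital (which is automatic since every $C^*$-algebra admits an approximate unit). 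The Whitehead spectrum of $G$ over $\cpt$, being by definition the homotopy cofibre of $\mu^\L_\cpt$, is therefore contractible, so all its homotopy groups vanish.

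Next I would argue that $\iota_\cpt$ induces a weak equivalence on nonconnective algebraic $\K$-theory. Since Gromov hyperbolic groups satisfy the Baum--Connes conjecture with coefficients, in particular $\mu^\BC_*$ is bijective, and Proposition 4.9 of \cite{CEO} combined with the torsion-freeness of $\KT_*(\cpt)$ (as exploited in the proof of Theorem \ref{cpt}) promotes this to bijectivity of $(\mu^\BC_\cpt)_*$. The diagram \eqref{5} of Lemma \ref{whitehead} then implies $(\mu^\alg_\cpt)_*$ is bijective, i.e., $\Cone(\mu^\alg_\cpt)\simeq 0$. Feeding this together with $\Cone(\mu^\L_\cpt)\simeq 0$ from the first step into the exact triangle \eqref{defect}, namely
\[
\Cone(\mu^\L_\cpt)\map \Cone(\mu^\alg_\cpt)\map \Cone(\iota_\cpt),
\]
forces $\Cone(\iota_\cpt)\simeq 0$, which is exactly the claim. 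Equivalently, this is a direct application of Lemma \ref{obstruction}: hyperbolicity kills the obstruction that lives in the Whitehead spectrum.

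The principal conceptual obstacle is that $\cpt$ is non-unital, which at first glance obstructs a direct appeal to Farrell--Jones (whose standard formulation takes ring coefficients). This is handled cleanly via the unitization-and-excision device codified in Remark \ref{Hextension}. Beyond this, both conclusions reduce to triangulated-category bookkeeping once the two cones are known to vanish, so no further analytic work is required.
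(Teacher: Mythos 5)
Your overall architecture (combine the Farrell--Jones theorem of Bartels--L\"uck--Reich with Baum--Connes for hyperbolic groups, then feed the two vanishing cones into the triangle \eqref{defect} via Lemma \ref{obstruction}) matches the paper's. However, your first step contains a genuine error.

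You write that because torsion-free Gromov hyperbolic groups satisfy the integral $\K$-theoretic Farrell--Jones conjecture, ``the Loday assembly map $(\mu^\L_R)_*$ is an isomorphism for every unital ring $R$ with trivial $G$-action.'' That inference is false. The Farrell--Jones conjecture concerns the Davis--L\"uck assembly map over the family $\VC$, whose source for a torsion-free group is $\H^G_*(E_\VC(G);\bK_{G,R})$, not $\H_*(BG;\bK_R)$. The two differ by Nil-terms: the Bartels--L\"uck--Reich result the paper actually invokes is the decomposition
$\K_n(R[G])\cong\H_n(BG;\bK_R) \oplus\left( \oplus_I (\NK_n(R)\oplus \NK_n(R))\right)$,
which shows $(\mu^\L_R)_*$ is a split monomorphism but an isomorphism only when the $\NK$-groups vanish. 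For a non-$\K$-regular ring $R$ and $G=\ZZ$ (a torsion-free hyperbolic group), the Loday map is not surjective, so your general statement cannot be right.

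Your intended conclusion for $R=\tilde{\cpt}$ happens to be correct, but exactly because $\NK_n(\tilde{\cpt})=0$ — which follows from $\NK_n(\cpt)=0$ (stability of $\cpt$, Theorem 3.4 of \cite{RosComparison}), $\NK_n(\CC)=0$ (regularity), and $\NK$-excision for the split extension $0\to\cpt\to\tilde{\cpt}\to\CC\to 0$. This Nil-vanishing step is the heart of the paper's proof and is missing from yours; you identified non-unitality of $\cpt$ as the main obstacle, but the real obstacle is the Nil-terms. Once you insert the observation that $\cpt$ is stable (hence $\NK_*(\cpt)=0$) and thread it through the unitization/excision argument as the paper does, the rest of your proof — bijectivity of $(\mu^\BC_\cpt)_*$ via Proposition 4.9 of \cite{CEO} and torsion-freeness of $\KT_*(\cpt)$, then the two-out-of-three for the cones in \eqref{defect} — is sound.
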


 \begin{proof}
Under the assumptions on the group $G$ it is known that it satisfies the Baum--Connes conjecture with $\CC$-coefficients \cite{MinYu}. By the previous Lemma it suffices to show that the Whitehead groups of $G$ over $\cpt$ vanish. A result of Bartels--L{\"u}ck--Reich says that all Gromov hyperbolic groups satisfy the Farrell--Jones isomorphism conjecture in algebraic $\K$-theory for every associative and unital ring $R$ (see Corollary 1.2 of \cite{BLR}); more precisely, the authors prove that for torsion free groups $\K_n(R[G])\cong\H_n(BG;\bK_R) \oplus\left( \oplus_I (\NK_n(R)\oplus \NK_n(R))\right)$, where $I$ denotes the set of conjugacy classes of maximal infinite cyclic subgroups of $G$. Using the naturality of the decomposition (in $R$), Remark \ref{Hextension} and excision in $\NK$-theory, one concludes
\beq \label{BR}
 \K_n(\cpt[G])\cong\H_n(BG;\bK_\cpt) \oplus\left( \oplus_I (\NK_n(\cpt)\oplus \NK_n(\cpt))\right).
 \eeq It is known that $\NK_n(\cpt)$ vanishes for all $n$, since $\cpt$ is a stable $C^*$-algebra (see Theorem 3.4 of \cite{RosComparison}). It follows that the Whitehead groups of $G$ over $\cpt$ vanish.
 \end{proof}

\begin{rem}
Since the $C^*$-algebra $C^*_r(G,\cpt)$ is stable, its nonconnective algebraic $\K$-theory is the same as its topological $\K$-theory, which in turn is the same as its topological $\K$-homology, i.e.,  $\K_*(\cpt[G])\simeq \KT_*(BG)$.
\end{rem}

\subsection{Algebraic $\K$-theory of $\S[G]$}
For any $p\geqslant 1$ let $\S_p$ denote the ring of operators of Schatten $p$-class, i.e., $T\in B(H)$ is an element of $\S_p$ if and only if $\textup{tr}((T^*T)^{\frac{p}{2}})<\infty$ (see \cite{Schatten} for further details). The algebra of Schatten class operators is defined to be $\S=\cup_{p\geqslant 1} \S_p$. There is a canonical sequence of $\CC$-algebra inclusions $\S\subset \cpt \subset B(H)$. It follows from Lemma 8.2.3 and Theorem 8.2.5 of \cite{CorTho1} that $\S[G]$ is $\H$-unital for any discrete group $G$.

 \begin{conj}[Yu \cite{GuoliangNovikov}] \label{YuConj}
 For any discrete group $G$ the canonical algebra homomorphism $i:\S[G]\cong\CC[G]\otimes_\CC \S\map C^*_r(G)\prot \cpt = C^*_r(G,\cpt)$ induces an isomorphism between their algebraic $\K$-theory groups $$i_*: \K_n(\S[G])\map\K_n( C^*_r(G,\cpt)).$$
 \end{conj}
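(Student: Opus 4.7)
My plan is to factor the canonical map $i: \S[G] \to C^*_r(G,\cpt)$ through the inclusion $\S[G] \hookrightarrow \cpt[G]$ followed by $\iota_\cpt: \cpt[G] \to C^*_r(G,\cpt)$, and to prove each factor induces a weak equivalence on nonconnective algebraic $\K$-theory spectra. This reduces the conjecture to two independent problems, one purely algebraic (comparing $\S$ and $\cpt$ over the complex group ring) and one essentially $C^*$-algebraic (the algebraic reduced assembly map for $\cpt$-coefficients).

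\textbf{Step 1: the inclusion $\S[G] \hookrightarrow \cpt[G]$.} At the level of coefficient algebras, $\S \hookrightarrow \cpt$ already induces an equivalence $\bK_\S \overset{\sim}{\map} \bK_\cpt$ by Suslin--Wodzicki. To lift this to the group algebra level, I would consider the short exact sequence of $\CC$-algebras
\[ 0 \to \S[G] \to \cpt[G] \to (\cpt/\S)[G] \to 0 \]
and invoke Wodzicki excision in algebraic $\K$-theory, using that $\S[G]$ is $H$-unital by \cite{CorTho1}. The content then reduces to showing $\K_n((\cpt/\S)[G]) = 0$ for every $n$. The vanishing of $\K_*(\cpt/\S)$ itself is immediate from $\K_*(\S) \cong \K_*(\cpt)$ and the associated six-term sequence; to bootstrap this to the group-ring level, I would construct a Loday assembly map with $\cpt/\S$-coefficients (using $H$-unitality of $(\cpt/\S)[G]$ or a suitable approximation) and apply the Künneth-type spectral sequence of Section \ref{SpecSeq} so that the problem reduces to the statement that the Loday assembly map for a coefficient algebra with trivial $\K$-theory is an equivalence.

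\textbf{Step 2: the homomorphism $\iota_\cpt: \cpt[G] \to C^*_r(G,\cpt)$.} This is exactly the situation analysed in Lemmas \ref{obstruction} and \ref{whitehead}: the obstruction to $(\iota_\cpt)_*$ being an isomorphism sits in the Whitehead spectrum of $G$ over $\cpt$, modulo the Baum--Connes assembly isomorphism $(\mu^\BC_\cpt)_*$. Thus this step requires (i) the Baum--Connes conjecture with $\CC$-coefficients for $G$, to ensure $(\mu^\BC_\cpt)_*$ is an isomorphism via \cite{CEO}, and (ii) the vanishing of all algebraic Whitehead groups of $G$ over $\cpt$. The second input would follow from the Farrell--Jones conjecture with $\cpt$-coefficients combined with $\NK_*(\cpt) = 0$ from \cite{RosComparison}, exactly as in the proof of Theorem \ref{mySW}.

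\textbf{Main obstacle.} The principal difficulty is that the conjecture is asserted for every discrete group, while the strategy above is conditional on both Baum--Connes and a strong form of Farrell--Jones, neither of which is known in full generality. The best one can extract from this approach is a proof of the conjecture for the large but conditional class of groups simultaneously satisfying Baum--Connes and the $\K$-theoretic Farrell--Jones isomorphism conjecture with $\cpt$-coefficients. An unconditional proof appears to demand a qualitatively new input --- for instance, a direct bivariant identification showing that $\S[G]$ and $C^*_r(G,\cpt)$ are $\KK^{\alg}$-equivalent by smashing with an idempotent in some category of $H$-unital algebras, thereby bypassing the intermediate passage through $\cpt[G]$ altogether.
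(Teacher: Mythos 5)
The statement you were asked to prove is Yu's conjecture, which the paper does not prove: it is recorded as an open conjecture for arbitrary discrete groups, and the paper only offers partial results around it. Your factorization $\S[G]\map\cpt[G]\map C^*_r(G,\cpt)$ is exactly the one the paper proposes immediately after stating the conjecture: the Farrell--Jones isomorphism conjecture handles the first factor, and Theorem \ref{mySW} (Baum--Connes with $\CC$-coefficients plus vanishing of the Whitehead groups of $G$ over $\cpt$, via Bartels--L\"uck--Reich and $\NK_*(\cpt)=0$) handles the second factor for torsion free Gromov hyperbolic groups. So your Step 2 coincides with the paper's Lemmas \ref{obstruction} and \ref{whitehead}, and your closing assessment is the correct one: the argument is inherently conditional, and no unconditional proof of the conjecture can be extracted from this route. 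You have not proved the statement, but neither does the paper, and you have correctly identified both the available partial strategy and its limits.

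Two remarks on your Step 1. First, the paper compares $\S[G]$ with $\cpt[G]$ by naturality of the Loday assembly map together with the weak equivalence $\bK_\S\map\bK_\cpt$ (deduced from the computations of $\K_*(\S)$ in \cite{CorTho1,CorTar}, not from Suslin--Wodzicki, which only supplies excision), rather than through the quotient $(\cpt/\S)[G]$. Second, your variant has a technical gap: Wodzicki excision needs the ideal $\S[G]$ to be $H$-unital (which is fine), but to define a Loday assembly map with coefficients in $\cpt/\S$ via the unitization trick you would need $H$-unitality of $\cpt/\S$ itself, which you do not address; and the statement that ``the assembly map for a coefficient algebra with trivial $\K$-theory is an equivalence'' is not formal --- it is again a Farrell--Jones-type isomorphism statement, so this detour is no less conditional than the paper's direct use of Farrell--Jones, while being technically more delicate.
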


The algebra homomorphism $i:\S[G]\map C^*_r(G,\cpt)$ can be factorized as $\S[G]\map\cpt[G]\map C^*_r(G,\cpt)$. Now one can investigate the analogue of Yu's conjecture for these two homomorphisms separately. The Farrell--Jones isomorphism conjecture in algebraic $\K$-theory implies the analogue of Yu's conjecture for $\S[G]\map\cpt[G]$. Theorem \ref{mySW} above gives an affirmative answer to the analogue of Yu's conjecture for $\cpt[G]\map C^*_r(G,\cpt)$ when the group $G$ is torsion free Gromov hyperbolic (note that such groups are known to satisfy both the Farrell--Jones isomorphism conjecture in algebraic $\K$-theory and the Baum--Connes conjecture \cite{MinYu,Lafforgue}).

Observe that $\pi_*(\bK_\cpt)$ is Bott $2$-periodic (due to its identification with topological $\K$-theory). In fact, $\pi_*(\bK_\cpt)$ is $\ZZ$ if $*$ is even, and $\{0\}$ if $*$ is odd. Since $\S$ is a sub-harmonic ideal, the same conclusion holds for the algebraic $\K$-theory of $\S$ (see Theorem 8.2.5 of \cite{CorTho1} and Corollary 4.2 of \cite{CorTar}). An easy inspection reveals that the canonical inclusion $\S\map\cpt$ induces a weak homotopy equivalence $\bK_\S\map\bK_\cpt$. It is also shown in Theorem 8.2.5 of \cite{CorTho1} that $\S$ is $\K$-regular, and we already know that it is $\H$-unital. Therefore, the Loday assembly map can be defined with coefficients in $\S$ (see Remark \ref{Hextension}). As before, for any discrete and torsion free group $G$, using Lemma \ref{simplify} we identify the Davis--L{\"u}ck assembly map in $\K$-theory with the Loday assembly map $(\mu^\L_\S)_*:\H_*(BG;\bK_\S)\map\K_*(\S[G])$. Excision and $\K$-regularity of $\S$ enable us to conclude that the Whitehead groups of any torsion free Gromov hyperbolic group over $\S$ vanish. Using the naturality of the Loday assembly map, once again we have the following commutative diagram:
\beqn
\xymatrix{
\H_*(BG;\bK_\S)\ar[r]\ar[d]_\cong & \K_*(\S[G])\ar[d] \\
\H_*(BG;\bK_\cpt)\ar[r] & \K_*(\cpt[G]),
}\eeqn where the vertical arrows are induced by $\S\map\cpt$. Since the map $\bK_\S\map\bK_\cpt$ is a weak homotopy equivalence the left vertical arrow is an isomorphism. Now Theorem \ref{cpt} implies

\begin{thm} \label{overS}
Let $G$ be a discrete and torsion free group. If $G$ satisfies the integral $\KT$-theoretic (resp. split $\KT$-theoretic) Novikov conjecture with $\CC$-coefficients, then $G$ and $\S$ satisfy the integral $\K$-theoretic (resp. split $\K$-theoretic) Novikov conjecture, i.e., the Loday assembly map $(\mu^\L_\S)_*:\H_*(BG;\bK_\S)\map\K_*(\S[G])$ is injective (resp. split injective).
\end{thm}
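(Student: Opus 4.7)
The plan is to reduce Theorem \ref{overS} to Theorem \ref{cpt} via the naturality square displayed immediately before the theorem:
\[
\xymatrix{
\H_*(BG;\bK_\S)\ar[r]^{(\mu^\L_\S)_*}\ar[d] & \K_*(\S[G])\ar[d] \\
\H_*(BG;\bK_\cpt)\ar[r]^{(\mu^\L_\cpt)_*} & \K_*(\cpt[G])
}
\]
whose vertical arrows are induced by the canonical $\CC$-algebra inclusion $\S\hookrightarrow\cpt$. The Loday assembly map $(\mu^\L_\S)_*$ is available because $\S$ is $\H$-unital (Lemma 8.2.3 and Theorem 8.2.5 of \cite{CorTho1}), so excision applies as in Remark \ref{Hextension}; its identification with the Davis--L\"uck version through Lemma \ref{simplify} uses the $\K$-regularity of $\S$, also recorded in Theorem 8.2.5 of \cite{CorTho1}.

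The first step is to verify that the left vertical arrow is an isomorphism. Since $\S$ is a sub-harmonic operator ideal, its nonconnective algebraic $\K$-theory is Bott $2$-periodic and coincides, under the inclusion, with $\pi_*(\bK_\cpt)\cong\pi_*(\bK^\top_\cpt)$ by Theorem 8.2.5 of \cite{CorTho1} together with Corollary 4.2 of \cite{CorTar}. Hence $\bK_\S\map\bK_\cpt$ is a weak homotopy equivalence, and smashing with $BG_+$ produces the required isomorphism of homotopy groups of the domains.

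It then remains to perform a routine diagram chase. By Theorem \ref{cpt}, the hypothesis that $\mu^\BC_*$ is (split) injective yields (split) injectivity of the bottom horizontal arrow $(\mu^\L_\cpt)_*$. If $x\in\H_*(BG;\bK_\S)$ lies in the kernel of $(\mu^\L_\S)_*$, commutativity forces its image in $\H_*(BG;\bK_\cpt)$ to die under $(\mu^\L_\cpt)_*$, hence to vanish; injectivity of the left vertical map then gives $x=0$. For the split case, I would transport a splitting of $(\mu^\L_\cpt)_*$ back along the inverse of the left vertical isomorphism, precomposing with the change-of-coefficients map $\K_*(\S[G])\map\K_*(\cpt[G])$, to obtain a splitting of $(\mu^\L_\S)_*$. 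The only genuinely nontrivial input beyond Theorem \ref{cpt} itself is the weak equivalence $\bK_\S\simeq\bK_\cpt$, which is an established computation in the operator-ideal $\K$-theory literature rather than a new obstacle; everything else is formal.
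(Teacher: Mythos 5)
Your proposal is correct and follows essentially the same route as the paper: both establish the weak equivalence $\bK_\S \map \bK_\cpt$ via the sub-harmonicity of $\S$ and Theorem 8.2.5 of \cite{CorTho1} together with Corollary 4.2 of \cite{CorTar}, and then use the naturality square for $\S\hookrightarrow\cpt$ to reduce the statement to Theorem \ref{cpt}. The explicit diagram chase you supply, including transporting a splitting of $(\mu^\L_\cpt)_*$ back along $\alpha^{-1}\circ s\circ\beta$, is precisely the content the paper leaves implicit in ``Now Theorem \ref{cpt} implies.''
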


\begin{rem}
The above Theorem is an integral $\K$-theoretic statement, whereas the main result of Yu in \cite{GuoliangNovikov} proves the rational injectivity of $(\mu^\L_\S)_*$ for all discrete groups with no further assumptions (see also \cite{CorTar}).
\end{rem}

\begin{rem}
Summing up one has the following sequence of implications (inj. = injective):
\beqn
\text{$\mu^\BC_*$ (split) inj. $\Leftrightarrow$ $(\mu^\BC_\cpt)_*$ (split) inj. $\Rightarrow$ $(\mu^\L_\cpt)_*$ (split) inj. $\Rightarrow$ $(\mu^\L_\S)_*$ (split) inj.,}
\eeqn where the first equivalence is due to \cite{CEO}. 
\end{rem}

\section{$\K$-theoretic Novikov conjecture with finite coefficients}
Browder initiated the study of $\K$-theory with finite coefficients \cite{Browder}. For a pointed space $X$ one obtains its homotopy groups with coefficients in $\ZZ/n$ by replacing the spheres $S^i$ in the definition of homotopy groups $\pi_i(X)=[S^1,X]$ by the Moore space $M_n^i:= S^{i-1}\cup_n e^i$, i.e., an $i$-cell attached to $S^{i-1}$ by a map $S^{i-1}\map S^{i-1}$ of degree $n$. This enables us to construct {\em $\K$-theory with finite coefficients} \cite{Browder}, which enjoys many good functorial properties like ordinary $\K$-theory. For a recent survey of the theory see \cite{Neisendorfer}. Another possibility of introducing coefficients in $\K$-theory is the following: Let $\SS(\ZZ/n)$ denote the mod-$n$ Moore spectrum, i.e., the cofibre of a degree $n$ map $\SS\map\SS$ between the sphere spectra. For any $C^*$-algebra $A$, define $\K_*(A,\ZZ/n) := \pi_*(\bK_A\wedge\SS(\ZZ/n))$ (resp. $\KT_*(A,\ZZ/n) := \pi_*(\bK^\top_A\wedge\SS(\ZZ/n))$). The two possible definitions will agree connectively. Karoubi studied algebraic and topological $\K$-theory of Banach algebras with finite coefficients and obtained some striking results in \cite{KarFinCoe}. It is known that if  $A=\CC$ or $\cpt$, the comparison map with finite coefficients $\K_*(A,\ZZ/n)\map\K^\top_*(A,\ZZ/n)$ is an isomorphism for all $n\geqslant 2$ and $*\geqslant 0$ (see Theorem 3.11 of \cite{RosComparison}).

\begin{lem} \label{AHSS}
Let $\E$ be a connective spectrum and $\E\map\F$ be a map of spectra such that $\pi_*(\E)\map\pi_*(\F)$ is an isomorphism for all $*\geqslant 0$. Then the induced map $\pi_*(\E,\ZZ/n)\map\pi_*(\F,\ZZ/n)$ is an isomorphism for all $*\geqslant 1$ and $\pi_0(\E,\ZZ/n)\map\pi_0(\F,\ZZ/n)$ is a monomorphism. The map $\pi_0(\E,\ZZ/n)\map\pi_0(\F,\ZZ/n)$ is an isomorphism if $\pi_{-1}(\F)=\{0\}$.
\end{lem}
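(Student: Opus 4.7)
The plan is to smash the defining cofiber sequence $\SS\stackrel{n}{\map}\SS\map\SS(\ZZ/n)$ with both $\E$ and $\F$ and compare the resulting Bockstein long exact sequences in homotopy. For any spectrum $X$ this produces a natural universal coefficient short exact sequence
\[
0\map \pi_k(X)/n\pi_k(X)\map \pi_k(X,\ZZ/n)\map \pi_{k-1}(X)[n]\map 0,
\]
where $[n]$ denotes the $n$-torsion subgroup, and the map $\E\map\F$ induces a morphism between these sequences.

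For $k\geqslant 1$ I would apply a direct five lemma argument. The hypothesis that $\pi_j(\E)\map\pi_j(\F)$ is an isomorphism for all $j\geqslant 0$ yields isomorphisms on the flanking terms $\pi_k(-)/n$ (taking $j=k\geqslant 1$) and $\pi_{k-1}(-)[n]$ (taking $j=k-1\geqslant 0$). These combine into the asserted isomorphism $\pi_k(\E,\ZZ/n)\cong \pi_k(\F,\ZZ/n)$.

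For the edge case $k=0$ the connectivity of $\E$ is what I would use: since $\pi_{-1}(\E)=\{0\}$, the universal coefficient sequence for $\E$ collapses to $\pi_0(\E,\ZZ/n)\cong\pi_0(\E)/n$. Composing the induced isomorphism $\pi_0(\E)/n\cong\pi_0(\F)/n$ with the canonical injection $\pi_0(\F)/n\hookrightarrow \pi_0(\F,\ZZ/n)$ presents $\pi_0(\E,\ZZ/n)\map\pi_0(\F,\ZZ/n)$ as a monomorphism. If additionally $\pi_{-1}(\F)=\{0\}$, then $\pi_0(\F,\ZZ/n)\cong\pi_0(\F)/n$ as well, upgrading the monomorphism to an isomorphism. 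I do not expect any real obstacle here; the only subtle point is precisely this treatment of $\pi_0$, where connectivity of $\E$ is needed to kill the potentially obstructing $n$-torsion contribution from $\pi_{-1}(\E)$ in the universal coefficient sequence for $\E$.
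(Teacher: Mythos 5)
Your proof is correct and follows essentially the same route as the paper: naturality of the universal coefficient short exact sequence for mod-$n$ homotopy, the five lemma for $*\geqslant 1$, and a separate analysis at $\pi_0$. The paper phrases the $\pi_0$ step via the Snake Lemma rather than your explicit collapse of the $\E$-row using $\pi_{-1}(\E)=0$, but these are the same diagram chase.
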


\begin{proof}
Due to the naturality of the Universal Coefficient Theorem for homotopy groups with finite coefficients (for spectra), we have the following commutative diagram 
\beqn
\xymatrix{
0\ar[r] & \pi_*(\E)\otimes\ZZ/n \ar[r]\ar[d] & \pi_*(\E,\ZZ/n) \ar[r]\ar[d] &\Tor(\pi_{*-1}(\E),\ZZ/n)\ar[r]\ar[d] & 0\\
0 \ar[r] & \pi_*(\F)\otimes\ZZ/n \ar[r] & \pi_*(\F,\ZZ/n) \ar[r] & \Tor(\pi_{*-1}(\F),\ZZ/n) \ar[r] & 0.
}\eeqn The assertion for $*\geqslant 1$ now follows from the Short Five Lemma. Use the Snake Lemma for the injectivity (resp. the bijectivity) at the level of $\pi_0$ (resp. when $\pi_{-1}(\F)=\{0\}$).
\end{proof}

Let $\E$ be a (left) module spectrum over the orbit category $\Or(G)$. Given any $G$-CW complex $X$ one can construct an ordinary spectrum via the {\em balanced smash product} construction $$X^H_+\wedge_{\Or(G)} \E(G/H)=\coprod_{G/H} X^H_+\wedge \E(G/H)/\{\sim\},$$ where $\sim$ is the equivalence relation generated by $(x\phi,y)\sim (x,\phi y)$ for all $x\in X^K_+$, $y\in\E(G/H)$ and $\phi:G/H\map G/K$ (as explained in Section 5 of \cite{DavLue} in a slightly different notation). Let $E(G)$ be a classifying $G$-space for $G$ with respect to any family of subgroups. Then the pointed $G$-projection $E(G)_+\map S^0=(G/G)_+$ gives rise to a map of spectra \beq \label{SpeAss} E(G)^H_+\wedge_{\Or(G)}\E(G/H)\map \E(G/G).\eeq Applying homotopy groups we obtain the assembly maps in the $G$-homology theory defined by the $\Or(G)$-module spectrum $\E$. 

\begin{defn} \label{FinAss}
By applying the functor homotopy groups with finite coefficients to the above map \eqref{SpeAss}, we obtain the {\em assembly map with finite coefficients in the $G$-homology theory defined by $\E$} $$\H^G_*((E(G);\E),\ZZ/n)\map\H^G_*((S^0;\E),\ZZ/n).$$
\end{defn}

\noindent
Since $\CC$ is a regular noetherian ring containing $\QQ$, by Proposition 70 of \cite{LueRei} we conclude that the pointed $G$-projection $E_\cF(G)_+\map S^0$ induces $\mu^\DL_*$ in $\K$-theory (resp. $\KT$-theory) when $\E$ is the $\Or(G)$-module $\K$-theory spectrum (resp. $\KT$-theory spectrum).

\begin{thm} \label{overC}
Let $G$ be a discrete and torsion free group, such that the Baum--Connes assembly map with $\CC$-coefficients $\mu^\BC_*:\KT_*(G;\CC)\map\KT_*(C^*_r(G))$ is split injective. Assume, in addition, that the $\ZZ/n$-homology of $BG$ is concentrated in even degrees. Then $G$ satisfies the $\K$-theoretic Novikov conjecture with $\ZZ/n$-coefficients over $\CC$ and $\overline{\QQ}$, i.e., the following two Loday assembly maps with finite coefficients are injective: $$\H_*((BG;\bK_{\CC}),\ZZ/n)\map \K_*({\CC}[G],\ZZ/n) \text{  \quad and \quad  }\H_*((BG;\bK_{\overline{\QQ}}),\ZZ/n)\map \K_*(\overline{\QQ}[G],\ZZ/n).$$
\end{thm}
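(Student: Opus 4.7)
The plan is to reduce both cases to split injectivity of the Baum--Connes assembly map with $\ZZ/n$-coefficients, using Suslin's results on algebraic $K$-theory of fields with finite coefficients. First, Suslin rigidity implies that the inclusion $\overline{\QQ}\hookrightarrow\CC$ induces a weak equivalence of spectra $\bK_{\overline{\QQ}}\wedge\SS(\ZZ/n)\overset{\sim}{\to}\bK_\CC\wedge\SS(\ZZ/n)$, so the induced map $\H_*((BG;\bK_{\overline{\QQ}}),\ZZ/n)\to\H_*((BG;\bK_\CC),\ZZ/n)$ is an isomorphism. Naturality of the Loday assembly map along the inclusion $\overline{\QQ}[G]\to\CC[G]$ together with a short diagram chase then reduces the $\overline{\QQ}$-case to the $\CC$-case. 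For the latter I would consider the commutative square
\[
\xymatrix{
\H_*((BG;\bK_\CC),\ZZ/n)\ar[d]\ar[rr]^{(\mu^\L_\CC)_*} && \K_*(\CC[G],\ZZ/n)\ar[d]\\
\KT_*(BG,\ZZ/n)\ar[rr]^{\mu^\BC_*} && \KT_*(C^*_r(G),\ZZ/n)
}
\]
built from the change-of-theory transformation $\bK_\CC\to\bK^\top_\CC$, the inclusion $\CC[G]\to C^*_r(G)$, and the Hambleton--Pedersen identification of Davis--L\"uck and Baum--Connes assembly maps (cf. diagram~\eqref{1}). The hypothesis gives split injectivity of $\mu^\BC_*$ integrally, and since the Skandalis--Tu--Yu (or Guentner--Higson--Weinberger) splittings are realized at the $\KK^G$-level they descend to split injectivity with $\ZZ/n$-coefficients, so the bottom map is split injective. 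Hence it suffices to show the left vertical map is injective.

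Denote by $\mathbf{k}^\top_\CC$ the connective cover of $\bK^\top_\CC$. Since $\CC$ is regular the algebraic $K$-theory spectrum $\bK_\CC$ is connective, and Suslin's comparison theorem combined with Lemma~\ref{AHSS} identifies $\bK_\CC\wedge\SS(\ZZ/n)$ with $\mathbf{k}^\top_\CC\wedge\SS(\ZZ/n)$. The left vertical map thus becomes the natural comparison $\H_*((BG;\mathbf{k}^\top_\CC),\ZZ/n)\to\KT_*(BG,\ZZ/n)$ from connective to nonconnective complex topological $K$-homology of $BG$ with $\ZZ/n$-coefficients, and the task is to show it is injective. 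Equivalently, in the long exact sequence of the cofiber sequence $\mathbf{k}^\top_\CC\to\bK^\top_\CC\to\bK^\top_\CC/\mathbf{k}^\top_\CC$ smashed with $BG_+\wedge\SS(\ZZ/n)$, one must show that the connecting homomorphism $\partial$ vanishes in every non-negative degree.

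This final step is the crux and the main obstacle. The cofiber $\bK^\top_\CC/\mathbf{k}^\top_\CC$ has $\pi_{2k}=\ZZ$ for $k\leq-1$ and vanishes elsewhere; after smashing with $\SS(\ZZ/n)$ its homotopy lives only in degrees $\leq-1$. Under the hypothesis $\H_p(BG;\ZZ/n)=0$ for $p$ odd, the $E^2$-pages of the Atiyah--Hirzebruch spectral sequences for all three terms of the cofiber sequence are concentrated in even $p$, so all odd-length AHSS differentials vanish for parity reasons. A careful comparison of the three spectral sequences together with Lemma~\ref{AHSS} then forces $\partial=0$ in every non-negative degree, yielding injectivity of the left vertical map and hence of $(\mu^\L_\CC)_*$ with $\ZZ/n$-coefficients.
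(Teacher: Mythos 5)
Your overall outline matches the paper's: pass through the nonconnective Loday assembly map, use the Hambleton--Pedersen identification to replace $\mu^\DL_*$ by $\mu^\BC_*$, and handle the $\overline{\QQ}$-case via Suslin rigidity for $\overline{\QQ}\hookrightarrow\CC$ with $\ZZ/n$-coefficients. Two of your intermediate steps, however, have genuine gaps.

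\emph{Injectivity of the bottom row with $\ZZ/n$-coefficients.} You assert that the splitting of $\mu^\BC_*$ is ``realized at the $\KK^G$-level'' (citing STY/GHW) and hence descends after smashing with the mod-$n$ Moore spectrum. But the hypothesis of the theorem is just that $\mu^\BC_*$ is split injective \emph{as a map of graded abelian groups}; no spectrum- or $\KK^G$-level retraction is assumed, and the theorem must be proved for arbitrary $G$ satisfying the stated hypotheses, not merely those covered by STY/GHW. The paper instead invokes the naturality of the universal coefficient sequence $0\to\pi_*(\E)\otimes\ZZ/n\to\pi_*(\E,\ZZ/n)\to\Tor(\pi_{*-1}(\E),\ZZ/n)\to 0$: split injectivity of $\mu^\BC_*$ gives split injectivity of $\mu^\BC_*\otimes\ZZ/n$ and of $\Tor(\mu^\BC_*,\ZZ/n)$, and the Snake Lemma then yields \emph{injectivity} (which is all that is needed) of the middle map. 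Your stronger claim of split injectivity is neither necessary nor established.

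\emph{The connective vs. nonconnective comparison.} Your identification $\bK_\CC\wedge\SS(\ZZ/n)\simeq\mathbf{k}^\top_\CC\wedge\SS(\ZZ/n)$ via Suslin plus Lemma \ref{AHSS} is fine (both are connective, and the map is an isomorphism on $\pi_*$ in nonnegative degrees). But your cofiber-sequence strategy for the map $\H_*((BG;\mathbf{k}^\top_\CC),\ZZ/n)\to\H_*((BG;\bK^\top_\CC),\ZZ/n)$ is left hanging: ``a careful comparison of the three spectral sequences \ldots forces $\partial=0$'' is not an argument, and indeed the connecting homomorphism is generally not compatible with AHSS filtrations in any obvious way, so the parity observation on the individual $E^2$-pages does not by itself force $\partial=0$. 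The paper takes a more direct route: compare only the two AHSS's for $\mathbf{k}^\top_\CC$ and $\bK^\top_\CC$. Under the hypothesis on $\H_*(BG;\ZZ/n)$, every $E^2$-entry sits in even bidegree, so \emph{all} $d_r$ ($r\geq 2$) vanish (not only the ``odd-length'' ones as you say), both spectral sequences collapse, and the $E^2$-map --- which is injective by Lemma \ref{AHSS} --- passes to an injection on the abutment since the skeletal filtration of $BG_+$ is exhaustive and bounded below. This is the clean way to finish. You should replace the cofiber/connecting-homomorphism detour with this direct AHSS comparison, and replace the $\KK^G$-splitting appeal with the UCT/Snake-Lemma argument.
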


\begin{proof}
Let us first consider the case over $\CC$. Since $\H_*((BG;\bK_{\CC}),\ZZ/n)$ is trivial for $*<0$, we may concentrate only on nonnegative degrees. From Definition \ref{loday} and the commutative diagram \eqref{DLcom} we get
\beq \label{main}
\xymatrix{
&&\K_*(\CC[G])\ar[d]^{(\iota_\CC)_*}\\
\H_*(BG;\bK_\CC)\ar[d]_{c(\CC)_*}\ar[rr]^{\mu^\alg_*} \ar[rru]^{\mu^\L_*} && \K_*( C^*_r(G)) \ar[d]^{c(C^*_r(G))_*}\\
\H_*(BG;\bK^\top_\CC)\ar[rr]^{\mu^\DL_*} && \K^\top_*( C^*_r(G)),
}\eeq The bottom horizontal arrow $\mu^\DL_*$ can be replaced by $\mu^\BC_*$ (thanks to the Hambleton--Pedersen Theorem $\mu^\DL_*\cong\mu^\BC_*$ \cite{HamPed}), which due to our assumption is split injective. It is clear how to incorporate $\ZZ/n$-coefficients into the diagram to obtain the corresponding assembly maps with finite coefficients (see Definition \ref{FinAss} above), since all the arrows in the above diagram have a homotopy theoretic origin; in other words, the diagram is obtained by applying homotopy groups with finite coefficients to the corresponding diagram of spectra. We are going to show that the left vertical arrow and the bottom horizontal arrow in \eqref{main} are injective with $\ZZ/n$-coefficients, which is enough to prove the result.

Observe that the Davis--L{\"u}ck assembly map $\mu^\DL_*$ in topological $\K$-theory is induced by the $G$-projection $E_\cF(G)_+\map S^0$. Thanks to the naturality (in the space variable) of the Universal Coefficient Theorem of homotopy groups with finite coefficients we have the following commutative diagram:
\beqn
\xymatrix{
0\ar[r] & \H_*(BG;\bK^\top_\CC)\otimes\ZZ/n \ar[r]\ar[d] & \H_*((BG;\bK^\top_\CC),\ZZ/n) \ar[r]\ar[d] &\Tor(\H_{*-1}(BG;\bK^\top_\CC),\ZZ/n)\ar[r]\ar[d] & 0\\
0 \ar[r] & \KT_*(C^*_r(G))\otimes\ZZ/n \ar[r] & \KT_*(C^*_r(G),\ZZ/n) \ar[r] & \Tor(\KT_{*-1}(C^*_r(G),\ZZ/n) \ar[r] & 0,
}
\eeqn where the vertical arrows are induced by the Davis--L{\"u}ck assembly maps. Notice that the two extremal vertical arrows are split injective. Now the Snake Lemma enables us to conclude that the middle vertical arrow (which is the bottom horizontal arrow in \eqref{main} with $\ZZ/n$-coefficients) is injective. The injectivity of the left vertical arrow in \eqref{main} with $\ZZ/n$-coefficients follows from a celebrated result of Suslin \cite{Suslin,SusLoc}, which says that $c(\CC):\bK_\CC\map\bK^\top_\CC$ induces an isomorphism $\K_*(\CC,\ZZ/n)\map\KT_*(\CC,\ZZ/n)$ in nonnegative degrees (and a split monomorphism otherwise). Indeed, the map $\bK_\CC\map\bK^\top_\CC$ can be factorized as $\bK_\CC\map\bK^\top_\CC\langle 0\rangle\map\bK^\top_\CC$, where $\bK^\top_\CC\langle 0\rangle$ denotes the connective cover of $\bK^\top_\CC$. It follows that the map $\H_*((BG;\bK_\CC),\ZZ/n)=\pi_*(BG_+\wedge\bK_\CC,\ZZ/n)\map\pi_*(BG_+\wedge\bK^\top_\CC\langle 0\rangle,\ZZ/n)$ is an isomorphism. The proof is now completed by invoking Lemma \ref{AHSS} and examining the morphism between the Atiyah--Hirzebruch spectral sequences for $BG$ induced by the map $\bK^\top_\CC\langle 0\rangle\wedge\SS(\ZZ/n)\map\bK^\top_\CC\wedge \SS(\ZZ/n)$, where $\SS(\ZZ/n)$ denotes the mod-$n$ Moore spectrum. Observe that under the extra hypothesis on the $\ZZ/n$-homology of $BG$, both Atiyah--Hirzebruch spectral sequences collapse at the $E^2$-page.

The assertion over $\overline{\QQ}$ now follows from another Theorem of Suslin that says: the field extension $\overline{\QQ}\hookrightarrow\CC$ induces an isomorphism in algebraic $\K$-theory with finite coefficients \cite{Suslin,SusLoc}.
\end{proof}

\begin{rem}
In the above Theorem \ref{overC}, the condition that the $\ZZ/n$-homology of $BG$ be concentrated in even degrees is satisfied, for instance, if the integral homology of $BG$ is concentrated in even degrees and it contains no $n$-torsion. In general, it suffices to impose conditions on $G$ that ensure the collapsing of the Atiyah--Hirzebruch spectral sequences $E^2_{p,q}=H_p(BG;\KT\langle 0\rangle_q(\CC,\ZZ/n))$ and $E^2_{p,q}=H_p(BG;\KT_q(\CC,\ZZ/n))$ at the $E^2$-pages. Here $\KT\langle 0\rangle_q(\CC,\ZZ/n)$ denotes the connective topological $\K$-theory groups of $\CC$ with $\ZZ/n$-coefficients. A simple computation reveals that $\KT_q(\CC,\ZZ/n)\simeq \ZZ/n$ if $q$ is even and $\{0\}$ otherwise.
\end{rem}

\begin{rem} \label{Gen}
For an arbitrary discrete group $G$ (not necessarily torsion free), there is a commutative diagram
\beqn
\xymatrix{
\H^G_*((E_\cF(G);\bK_{G,\CC}),\ZZ/n)\ar[r]\ar[d] & \K_*(\CC[G],\ZZ/n) \ar[d]\\
\H^G_*((E_\cF(G);\bK^\top_{G,\CC}),\ZZ/n)\ar[r] & \KT_*(C^*_r(G),\ZZ/n),
}
\eeqn where the horizontal arrows are induced by the assembly maps with finite coefficients. The split injectivity of $\mu^\BC_*$ will imply the injectivity of the bottom horizontal arrow. Therefore, in order to generalize Theorem \ref{overC} to groups containing torsion, i.e., to establish the injectivity part of the Farrell--Jones isomorphism conjecture in $\K$-theory with finite coefficients, one must look for conditions under which the left vertical arrow is injective. The $p$-chain spectral sequence \cite{DavLueSS} is relevant for this question.
\end{rem}


\bibliographystyle{abbrv}

\bibliography{/home/tubai/Professional/math/MasterBib/bibliography}

\vspace{3mm}

\end{document}